\newcommand{\bi}{\bar{i}}
\newcommand{\bj}{\bar{j}}
\newcommand{\bk}{\bar{k}}
\newcommand{\bl}{\bar{l}}
\newcommand{\bp}{\bar{p}}
\newcommand{\bs}{\bar{s}}
\newcommand{\bw}{\bar{w}}
\newcommand{\bbC}{\mathbb{ C}}
\newcommand{\bbR}{\mathbb{ R}}
\newcommand{\p}{\phi}
\newcommand{\bpartial}{\bar{\partial}}
\newcommand{\pbp}{\partial \bar{\partial}}
\newcommand{\red}[1]{\textcolor{red}{#1}}
\newcommand{\al}{\alpha}
\newcommand{\be}{\beta}
\newcommand{\ga}{\gamma}
\newcommand{\de}{\delta}
\newcommand{\oal}{\bar{\alpha}}
\newcommand{\obe}{\bar{\beta}}
\newcommand{\ode}{\bar{\delta}}
\newcommand{\fRe}{\mathfrak{Re}}
\newcommand{\cC}{\mathcal{C}}
\newcommand{\ol}{\overline}
\newcommand{\ul}{\underline}
\theoremstyle{definition}
\newtheorem{Def}{Definition}[section]
\newtheorem{ex}{Example}[section]
\newtheorem{theorem}{Theorem}[section]
\newtheorem{lemma}[theorem]{Lemma}
\newtheorem{prop}[theorem]{Proposition}
\newtheorem{cor}[theorem]{Corollary}
 \theoremstyle{definition}
\theoremstyle{remark}
\newtheorem{Rmk}[theorem]{Remark}
\numberwithin{equation}{section}
\begin{document}

\title[Volume forms on balanced manifolds]
{Volume forms on Balanced manifolds and the Calabi-Yau Equation}
\author{Mathew George}
\address{Department of Mathematics, Purdue University,
        West Lafayette, IN 47907, USA}
\email{georg233@purdue.edu}
%

\date{}

\begin{abstract}

 We introduce the space of mixed-volume forms endowed with a $L^2$ metric on a balanced manifold. A geodesic equation can be derived in this space that has an interesting structure and extends the equation of Donaldson \cite{Donaldson10} and Chen-He \cite{CH11} in the space of volume forms on a Riemannian manifold. This nonlinear PDE is studied in detail and we prove several estimates, under a positivity assumption. Later we study the Calabi-Yau equation for balanced metrics and introduce a geometric criterion for prescribing volume forms, that is closely related to the positivity assumption above. By deriving $C^0$ a priori estimates, we prove the existence of solutions on all such manifolds.

\end{abstract}

\maketitle

\vspace{2cm}
\section{Introduction}

Given a Hermitian manifold $(M, \omega)$, we say that $\omega$ is balanced if $d\omega^{n-1} =0$. This is equivalent to requiring that the trace of the torsion endomorphism of $\omega$ vanishes identically. These metrics were introduced by Michelsohn \cite{Michelsohn82} in 1982 as an alternative to K\"ahler metrics, which are known to impose many topological and geometric restrictions on a complex manifold. Balanced metrics can be seen as dual to K\"ahler metrics in a sense made precise by Michelsohn \cite{Michelsohn82}. Recently, they have gained relevance because of their applications in string theory, and in birational geometry. For example, the Strominger system \cite{Strominger86} consists of a system of coupled nonlinear equations on a complex $3$-fold $X$ and a bundle $E \to X$ over it, parts of which have been simplified by Li and Yau \cite{LY2005} to the problem of finding a conformally balanced metric. This can be reduced to a Calabi-Yau-type equation for balanced metrics, which will be discussed in Section \ref{CYB}. In birational geometry, balanced metrics are important, as the existence of balanced metrics is preserved under birational transformations \cite{AB96}. Hence it is thought that balanced metrics might give an important class of canonical metrics in non-K\"ahler geometry. For more details, we refer to \cite{Fu16, FLY12, TW10} and references therein.

In this paper, we consider the space of mixed-volume forms on a balanced manifold. A geodesic equation is derived in this space which yields a new nonlinear PDE which we wish to study in-depth. We find an interesting positivity assumption coming from the study of this equation which is also related to the problem of prescribing volume forms for balanced metrics that can be written as an $(n-1)$ Monge-Amp\`ere equation, similar to the Gauduchon conjecture \cite{STW17}.

The space of K\"ahler metrics on a K\"ahler manifold with an $L^2$ metric structure has been studied extensively starting with Mabuchi \cite{Mabuchi87}, Donaldson \cite{Donaldson99}, Semmes \cite{Semmes92}, and later by Chen \cite{Chen2000} and many others. Similar structures have also been introduced in the space of volume forms on a Riemannian manifold by Donaldson \cite{Donaldson10}. Such spaces seem to have interesting properties. For example, the geodesic equation in the space of K\"ahler potentials can be transformed into a degenerate complex Monge-Amp\`ere equation in one dimension higher. These find applications in geometric problems such as the uniqueness of constant scalar curvature metrics in a K\"ahler class when $c_1(M) \leq 0$. In the case of the space of K\"ahler metrics, geodesic rays are related to the Yau-Tian-Donaldson conjecture on the existence of cscK metrics.

These equations are generally degenerate and involves finding a weak solution to the geodesic equation corresponding to the given metric. It is of interest to extend such structures to Hermitian geometry. In the K\"ahler case, there are many simplifications especially in the variational computations that makes it possible to study these structures. Although this does not seem to be true in general, the balanced property might be sufficient in some cases.









Let $(M,\omega)$ be an $n$-dimensional closed balanced manifold. That is, $ (M, \omega )$ is a Hermitian manifold with the metric $\omega$ satisfying $d\omega^{n-1}=0$. Then for any smooth function $\phi$ on $M$, define a $ ( p, p ) $ form by 

$$\Omega_{\phi}=\omega^{p}+\sqrt{-1}\pbp(\phi \omega^{p-1}).$$

If $\omega^p $ is closed, then these forms are in the same $p^{th}$ Bott-Chern cohomology class $H_{BC}^{p,p}(M, \mathbb R)$. We consider the space of mixed-volume forms of order $p$ parametrized by smooth functions on $M$ in the following way.

\begin{equation}\label{I0.1}
    \mathcal{V}_p = \{ \phi\in C^{\infty}(M) : \Omega_{\phi} \wedge \omega^{n-p} >0 \}
\end{equation} 

 Then $\mathcal{V}_p$ is an infinite dimensional manifold with tangent space at any point identified with the set of all smooth functions on $M$.

$$T_\phi{\mathcal{V}_p} \cong \{\psi\in C^{\infty}(M)\}$$

\noindent The space $\mathcal{V}_p$ is endowed with the following $L^2$ metric.

\begin{equation}\label{I1}
    ( \psi_1, \psi_2 )_{\phi} = \left( \int_M \psi_1 \psi_2 \; \Omega_{\phi} \wedge \omega^{n-p} \right)^{\frac{1}{2}}
\end{equation}

The geodesic equation in $\mathcal{V}_p$ with respect to this metric is given by

\begin{equation} \label{I1.1}
\begin{aligned}
    \phi_{ t t } ( n + n X \phi + \Delta \phi) - | \nabla \phi_t |^2 = - \frac{ n X \phi_t^2 }{2},\\
    \end{aligned}
\end{equation}

\noindent with boundary conditions

$$ \phi(x,0) = \phi_0, \hspace{1cm} \p(x,1) =\p_1,$$

\noindent for a non-negative function $X$ involving $p$  and the torsion tensor of $( M, \omega )$ (see Section \ref{GE}).

The case that is particularly interesting is when $ p = n-1 $ so that  $\Omega_{ \p } = \omega_{\p}^{n-1} $ defines a $(1, 1)$ form which is also a balanced metric when $\omega_{\p}>0$. This cohomology relation is important, for instance in Calabi-Yau type theorems for balanced metrics, where we search for a balanced metric $\omega_{\phi}$ with a prescribed volume form. In this case, the ellipticity cone is contained in $\mathcal{V}_{n-1}$ defined above.

The space of volume forms on a Riemannian manifold was initially introduced by Donaldson in the context of a free boundary problem related to Nahm's equation \cite{Donaldson10}. This is given by 

\begin{equation} \label{I2}
    \mathcal{V}=\{\phi\in C^{\infty}(M) : 1 - \Delta \phi > 0\}
\end{equation}

\noindent with the metric on $T_{\phi}M$, 
 
 \begin{equation}\label{I2.1}
     ||\psi||^2=\int_{M}\psi^2(1-\Delta \phi)dV.
 \end{equation}

The geodesic equation in this case is

\begin{equation} \label{I2}
    \p_{t t }( 1 - \Delta \p) - \sum_k \p_{t k}^2 =0.
\end{equation}

This is sometimes also referred to as the Donaldson equation and was shown to have $C^{1, \alpha }$ weak solutions by Chen-He \cite{CH11}. The regularity was subsequently improved to $C^{1 , 1}$ by Chu \cite{Chu19}. There have been subsequent works by Chen-He \cite{CH19} and He \cite{He21} extending this equation to cover, in particular, certain cases of the Streets-Gursky equation \cite{GS19}. In the case when $M$ is K\"ahler, equation \eqref{I1.1} will be identical to \eqref{I2}, since $X=0$ for K\"ahler manifolds.

We aim to study the equation \eqref{I1.1} in detail. It is clear that the sign of $X$ is an important factor for this equation. In this paper we assume that $X\leq 0$, so that the equation is degenerate elliptic. In later works, we hope to consider the geometric case when $X \geq 0$ so that the equation is degenerate hyperbolic.

The techniques from \cite{CH11} cannot be applied to equation \eqref{I1.1}. The major obstacle in deriving estimates are the terms involving the function $X$. The structure of the equation is such that there are some important cancellations that enable us to overcome this.

For avoiding degeneracies, we consider the perturbed equation 

\begin{equation} \label{pde}
\begin{cases}
    \p_{ t t }( n + n X \p + \Delta \p) - |\nabla \p_t|^2 = \epsilon - \dfrac{ n X \p_t^2 }{2} \\
     {\p}( x, 0 ) = \phi_0, \hspace{1cm} \\
     {\p}( x, 1 ) = \phi_1.
\end{cases}
\end{equation}

\noindent and then take limits as $\epsilon \to 0$. Here $\p_0$ and $\p_1$ are assumed to be smooth. A subsolution $\underline \p$ is a smooth function satisfying the following.
\begin{equation}\label{subsolution}
        \underline \p_{ t t }( n + n X \underline \p + \Delta \underline \p) - |\nabla \underline \p_t|^2 > \epsilon - \frac{ n X \underline \p_t^2 }{2}
\end{equation}
\noindent and the boundary conditions
\begin{equation}\label{sub-bound}
    \begin{aligned}
 \ul{\p}( x, 0 ) = \phi_0, \hspace{1cm} \text{and} \hspace{1cm} \ul{\p}( x, 1 ) = \phi_1.
    \end{aligned}
\end{equation}

Denote $Y = M \times [0,1]$. The following estimates will be shown in this paper.

\begin{theorem}\label{balanced-main}
Let $\p\in C^4(Y)$ be solution of \eqref{pde}. Assume that a subsolution $\underline \p$ satisfying equation \eqref{subsolution} and \eqref{sub-bound} exists and $X \leq 0$. Then we have the following estimates
\begin{equation}\label{theorem-geo}
\begin{aligned}
    &\sup_{Y} ~| \p_{tt} | \;\leq C\\ 
    &\sup_{\partial Y} ~ (|\p_{tt}| +  |\nabla \p_{t} |) \; \leq C\\   
\end{aligned}
\end{equation}
\end{theorem}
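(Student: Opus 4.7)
The plan is to establish both estimates via maximum principle arguments on suitably chosen auxiliary functions: for $\p_{tt}$ by differentiating the PDE in the time variable and using comparison with $\underline{\p}$ on the parabolic boundary $t=0,1$; for $\partial\bar\partial\p$ by testing against the largest eigenvalue of the spatial complex Hessian (the trace $\Delta\p$ being insufficient in the non-K\"ahler setting) and exploiting the sign $X\le 0$ together with the specific algebraic form of the right-hand side of \eqref{pde}.

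For the $\p_{tt}$ estimate, set $F := n + nX\p + \Delta\p > 0$, so the PDE reads
$$\p_{tt}\, F \;=\; \epsilon + |\nabla \p_t|^2 - \tfrac{nX}{2}\,\p_t^2.$$
Since $X\le 0$, the right-hand side is strictly positive, so $\p_{tt}>0$ pointwise, giving the lower bound for free. For the upper bound, I differentiate \eqref{pde} twice in $t$ to obtain a linear second-order equation for $u := \p_{tt}$ whose leading operator $L$ is the linearization of the PDE; applying $L$ to $u$ (or to $u\,e^{-A\p}$ for $A$ large) and using Cauchy--Schwarz to absorb the first-order cross terms, the maximum principle controls any interior maximum. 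On $\partial Y$, coincidence with $\underline{\p}$ at $t=0,1$ together with the strict subsolution inequality \eqref{subsolution} forces $\p_t \ge \underline{\p}_t$ at $t=0$ and $\p_t \le \underline{\p}_t$ at $t=1$; plugging these and the data $\phi_0,\phi_1$ back into the PDE directly yields a boundary bound on $\p_{tt}$.

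For the complex Hessian estimate, the boundary values on $\partial Y$ equal $\partial\bar\partial \phi_{0,1}$ and are therefore controlled by the data alone. For the interior, I would take the auxiliary function
$$G \;=\; \log \lambda_1(\partial\bar\partial \p) \;-\; A\,\p \;+\; B\,|\nabla \p|^2,$$
with $A,B$ constants to be chosen. At an interior maximum $p_0 \in Y$, choose a unitary frame diagonalizing $\p_{i\bar{j}}$ so that $\lambda_1 = \p_{1\bar 1}$ (with the standard perturbation to handle possible non-smoothness of $\lambda_1$), and apply the linearized operator of \eqref{pde} to $G$. This generates four groups of terms: positive diagonal contributions from $-A\p$ scaled by $F$, gradient-squared contributions from $B|\nabla\p|^2$, second-order controlled terms from $\log\lambda_1$ via the once-differentiated PDE, and torsion cross-terms coming from the commutator $[\partial_i,\bar\partial_j]\,\p_{1\bar 1}$ and from derivatives of $X$.

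The main obstacle is closing this last step: the torsion cross-terms vanish in the K\"ahler case ($X\equiv 0$, recovering Chen--He) but must here be absorbed into the positive contribution $-A\,F\,\p_{1\bar 1}$. The sign $X\le 0$, together with the cancellation between the $\p_{tt}(nX\p)$ piece and the $-\tfrac{nX}{2}\p_t^2$ piece after $t$-differentiation, provides precisely the algebraic room needed; this is the ``interesting cancellation'' flagged in the introduction. Choosing $A$ large enough to dominate the torsion coefficient and $B$ to match the leftover gradient-squared terms gives $\lambda_1(p_0) \le C\bigl(1+|\nabla\p|^2(p_0)\bigr)$. Since the maximum of $G$ dominates $\lambda_1$ on all of $Y$, this propagates globally and yields the second estimate of \eqref{theorem-geo}.
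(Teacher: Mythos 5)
Your outline of the interior arguments is in the right family (maximum principle on $\p_{tt}$ plus a largest-eigenvalue test function for the complex Hessian, as in the paper), but there are concrete gaps. The most serious is the boundary. You claim that the bound on $\p_{tt}$ along $t=0,1$ follows ``directly'' from the PDE once $\p_t$ and the data $\phi_0,\phi_1$ are controlled. It does not: solving the equation for $\p_{tt}$ gives $\p_{tt}=\bigl(\epsilon+|\nabla\p_t|^2-\tfrac{nX}{2}\p_t^2\bigr)/A(\p)$, so you need a bound on the mixed space--time derivatives $\p_{kt}$ \emph{on the boundary}, and these are not determined by the boundary data (they involve the normal derivative $\partial_t$) nor by the $C^1$ bound on $\p_t$. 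The paper spends a separate section constructing a local barrier $h=A(\p-\ul{\p})+B|z|^2+C(t-t^2)\pm(\p-\ul{\p})_k$ on $B_\de\times[0,t_0]$ precisely to prove $|\p_{kt}|\le CK$ at $t=0,1$; only then does the equation yield $\p_{tt}$ on $\partial Y$, and only then does the interior estimate $\sup_Y\p_{tt}\le C\sup_{\partial Y}(1+\p_{tt})$ close. Your proposal omits this step entirely, and without it neither line of \eqref{theorem-geo} is complete.

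Two further points in the interior arguments need repair. For $\p_{tt}$: after differentiating the equation twice in $t$ you face a term $\Delta\p_t\cdot\p_{ttt}$ and a term quadratic in $\p_{tt}$ coming from $nX\p$; ``Cauchy--Schwarz to absorb first-order cross terms'' does not address either. The paper's mechanism is specific: the test function is $Q=\p_{tt}+(\ul{\p}-\p)$, so the first-order condition $Q_t=0$ at the maximum bounds $\p_{ttt}$ there; the equation is differentiated in log form so that the third-order squares carry a sign by concavity (plurisubharmonicity of $-\log(xy-\sum|z_k|^2)$); and the $X\p_{tt}^2$ terms cancel against $L_{tt}/L$ using $G(\p)=L$ -- this is where the ``interesting cancellation'' actually lives, not in the Hessian estimate as you place it. Finally, your Hessian test function $G=\log\lambda_1-A\p+B|\nabla\p|^2$ is problematic for the stated form of the estimate: propagating from the maximum point gives $\lambda_1\le\lambda_1(p_0)\,e^{B\sup|\nabla\p|^2+C}$, i.e.\ \emph{exponential} dependence on $\sup|\nabla\p|^2$ unless $B$ is taken of size $1/\sup(1+|\nabla\p|^2)$, whereas the theorem (and the blow-up argument that consumes it) requires the quadratic bound $C\sup_Y(1+|\nabla\p|^2)$. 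The paper avoids gradient terms altogether, using $G=\log\lambda_1+bt^2-\sqrt b\,t$ and the fact that $F^{tt}=A(\p)\ge\beta\lambda_1$ to generate the dominant positive term $bA(\p)$, with the gradient bound $K$ entering only linearly through the torsion and curvature error terms.
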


\noindent for a constant $C$ that depends only on $(M,\omega)$, $\underline{\p}$ and other known data.

However, to show the existence of weak solutions, all estimates up to second-order are required, which we pose as an open question.

\

The second aim of this paper is to study a Calabi-Yau-type theorem for balanced metrics. We state the main statement here and rest of the details are presented in Section \ref{CYB}. 

\begin{theorem}\label{theoremB}
Let $(M, \omega)$ be a balanced manifold such that there exists a Hermitian metric $\alpha$ on $M$ with

\begin{equation}\label{alpha+}
    \partial \bpartial \alpha^{n-2} \leq 0
\end{equation}
    \noindent as an $(n-1, n-1)$ form. Then given a $(1,1)$ form $\Psi$ in $H^{1,1}_{BC}(M, \mathbb R)$, there exists a balanced metric $\omega'$ such that $[{\omega'}^{n-1}] = [\omega^{n-1}] $ in $H_{BC}^{n-1,n-1}(M, \mathbb R)$, and

    \begin{equation}\label{chern-ricci}
        \mathrm{Ric}^{C}(\omega') = \Psi.
    \end{equation}
    
\end{theorem}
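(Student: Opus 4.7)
The plan is to reduce Theorem \ref{theoremB} to a scalar $(n-1)$-type complex Monge--Amp\`ere equation and solve it by the continuity method, with the main work going into the $C^{0}$ estimate which is where the sub-Astheno-K\"ahler hypothesis enters.

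First, I would reduce the Chern--Ricci prescription to a volume-form equation. Since $\Psi$ and $\Ric^{C}(\omega)$ lie in the same Bott--Chern class (both equal $\Psi$ modulo $\sqrt{-1}\pbp$), write $\Psi = \Ric^{C}(\omega) + \sqrt{-1}\pbp F$ for some $F \in C^\infty(M,\bbR)$. Using $\Ric^{C}(\omega) = -\sqrt{-1}\pbp \log\det g$, the equation $\Ric^{C}(\omega') = \Psi$ becomes
$$ \omega'^{\,n} = e^{\,F + c}\,\omega^{n}, $$
where $c$ is the normalizing constant determined by $\int_{M} \omega'^{\,n} = \int_{M} \omega^{n}$ (a condition forced by $[\omega'^{\,n-1}] = [\omega^{n-1}]$). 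The problem is thus to produce a balanced metric $\omega'$, in a prescribed $(n-1)$-th Bott--Chern class, with a prescribed volume form.

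Next, I would parametrize such balanced metrics by a single function. By Michelsohn's bijection between positive $(1,1)$-forms and positive $(n-1,n-1)$-forms on an $n$-fold, and following the Tosatti--Weinkove ansatz used in the Gauduchon conjecture \cite{STW17}, write
$$ \omega'^{\,n-1} = \omega^{n-1} + \sqrt{-1}\pbp(\phi\,\omega^{n-2}) + \text{lower-order torsion terms}, $$
so that $d\omega'^{\,n-1} = 0$ automatically and $\omega'$ is locally of the form $g_{i\bar j}(\phi) = g_{i\bar j} + \tfrac{1}{n-1}\bigl((\Delta\phi) g_{i\bar j} - \phi_{i\bar j}\bigr) + Z_{i\bar j}(\partial\phi)$. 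The prescribed-volume equation then becomes the fully nonlinear elliptic PDE $\det(g_{i\bar j}(\phi)) = e^{F+c}\det(g_{i\bar j})$ in the cone where $g(\phi) > 0$. I would then run the continuity method with parameter $t\in[0,1]$ and right-hand side $e^{tF + c_{t}}\omega^{n}$, starting from $\phi \equiv 0$ at $t=0$. Openness follows from the implicit function theorem once one quotients by constants, and closedness reduces to a priori estimates at all orders.

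For the higher-order estimates ($C^{1}$, $C^{2}$, and $C^{2,\alpha}$) on $\phi$, I would invoke the now-standard machinery developed for $(n-1)$-Monge--Amp\`ere equations on Hermitian manifolds \cite{STW17}; the torsion terms peculiar to the non-K\"ahler case are handled by these methods once $C^{0}$ is in hand. The main obstacle, and the genuinely new ingredient, is therefore the $C^{0}$ bound $\|\phi\|_{L^\infty}\leq C$. Here I would exploit the auxiliary metric $\alpha$ satisfying $\pbp\alpha^{n-2}\leq 0$: multiply the equation by a truncation such as $(\phi-\sup\phi+k)_{+}^{\,p}$, integrate against $\alpha^{n-2}$, and transfer the $\pbp$ from $\phi\,\omega^{n-2}$ onto $\alpha^{n-2}$ via integration by parts. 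In the Astheno-K\"ahler case $\pbp\alpha^{n-2}=0$ the resulting error term would vanish outright, as exploited in earlier work; in the sub-Astheno-K\"ahler case one instead obtains the favourable one-sided inequality $-\int (\phi-\sup\phi+k)_{+}^{\,p}\,\sqrt{-1}\pbp\alpha^{n-2}\geq 0$, which adds to the leading term rather than against it. Feeding this into a Moser iteration (or into an ABP-type argument in the style of Guo--Phong--Tong) should then yield the $L^\infty$ bound. Making the inequality $\pbp\alpha^{n-2}\leq 0$ suffice where prior approaches needed equality, and controlling the torsion-generated lower-order terms in the parametrization against it, is the principal technical difficulty.
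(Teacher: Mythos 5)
The central gap is in your parametrization. To stay in the prescribed Bott--Chern class $[\omega^{n-1}]$ and keep $d\omega'^{\,n-1}=0$, the ansatz must be exactly $\pbp$-exact, $\omega'^{\,n-1}=\omega^{n-1}+\sqrt{-1}\pbp(u\,\beta)$ for a fixed positive $(n-2,n-2)$-form $\beta$; the Gauduchon-conjecture correction $\mathrm{Re}\bigl(\sqrt{-1}\partial u\wedge\bpartial\omega^{n-2}\bigr)$ used in \cite{STW17} to kill zeroth-order terms only preserves $\pbp$-closedness, not $d$-closedness or the Bott--Chern class, so your ``lower-order torsion terms'' cannot be added here. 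Expanding $\sqrt{-1}\pbp(u\beta)$ then unavoidably produces the zeroth-order term $u\,\sqrt{-1}\pbp\beta$, i.e.\ the term $Eu$ in \eqref{CYBeqn}--\eqref{CYb}, which is missing from your local expression $g_{i\bar j}(\phi)=g_{i\bar j}+\tfrac{1}{n-1}\bigl((\Delta\phi)g_{i\bar j}-\phi_{i\bar j}\bigr)+Z_{i\bar j}(\partial\phi)$. Worse, the sign of this term depends on the choice of $\beta$: with your choice $\beta=\omega^{n-2}$, Lemma \ref{B2}(iii) gives $\sqrt{-1}\pbp\omega^{n-2}\wedge\omega=X\omega^n$ with $X\geq 0$, the wrong sign unless $\omega$ is K\"ahler, so even $\sup u$ cannot be controlled by the maximum principle. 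The hypothesis on $\alpha$ enters the paper precisely through the ansatz: one must build $\omega'^{\,n-1}$ with $\alpha^{n-2}$, so that $E=\star\sqrt{-1}\pbp\alpha^{n-2}\leq 0$. Because $E\not\equiv 0$, the equation is not the $E=0$ case solved in \cite{STW17}, so ``invoke the standard $(n-1)$-Monge--Amp\`ere machinery'' is not automatic either: the translation-invariance and the normalization $\sup u=0$ used there are lost, and the maximum-principle steps need the sign of $E$.

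Two further problems. First, $[\omega'^{\,n-1}]=[\omega^{n-1}]$ in $H^{n-1,n-1}_{BC}$ does \emph{not} force $\int_M\omega'^{\,n}=\int_M\omega^n$ (the volume form is a nonlinear function of the $(n-1,n-1)$-form), so your constant $c$ cannot be fixed a priori by matching volumes; as in \eqref{CYBeqn} it must be carried as an unknown $b$ determined together with $u$. Second, your $C^0$ sketch does not engage the real difficulty: the dangerous contribution is $Eu$ with $u$ of unknown sign near its minimum, not a term multiplied by your nonnegative truncation, and as written the pairings do not typecheck (a scalar equation ``integrated against $\alpha^{n-2}$'', or $\int(\phi-\sup\phi+k)_+^p\,\sqrt{-1}\pbp\alpha^{n-2}$, still need a $(1,1)$-factor to be numbers). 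The paper instead bounds $\sup u$ by linear elliptic theory applied to the trace inequality \eqref{trace}, using $tr_\alpha E\leq 0$, and bounds $\inf u$ by an ABP/$\mathcal{C}$-subsolution argument (alternatively the auxiliary Monge--Amp\`ere method), where $E\leq 0$ is combined with $u\leq 0$ on a small ball about the minimum so that $Eu\geq 0$ there. A Moser-type argument might be salvageable, but only after setting it up for the equation containing $Eu$ built from $\alpha^{n-2}$ and explaining how the sign of $u$ is controlled where it multiplies $E$.
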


Here $\mathrm{Ric}^C(\omega') = - \sqrt{-1} \pbp \log{{\omega'}^n}$ is the Chern-Ricci form associated to the metric $\omega'$. This is also equivalent to prescribing a volume form for the metric $\omega'$. Note that the assumption $X\leq 0$ in Theorem \ref{balanced-main} can be obtained by taking the $\alpha$-trace of \eqref{alpha+}.

It can be shown that equation \eqref{chern-ricci} can be transformed into

\begin{equation}\label{CYBeqn}
    \det \left(\omega_h + \frac{1}{(n-1)}\left(\Delta u ~\alpha - \sqrt{-1} \pbp u\right) + \chi(\partial u, \bpartial u) + E u \right) = e^{\psi+b} \det \alpha,
\end{equation}

\noindent with

$$\tilde{\omega}_u = \omega_h + \frac{1}{(n-1)}\left(\Delta u \alpha - \sqrt{-1} \pbp u\right) + \chi(\partial u, \bpartial u) + E u > 0.$$

\noindent  for an unknown function $u$ and a constant $b$. Refer to Section \ref{CYB} for the definitions of the terms involved. This equation has been observed in \cite{STW17}, where the authors solve it assuming that $E=0$, and with additional symmetry assumptions on $\chi(\partial u, \bpartial u)$. When $E\neq 0$, there are many difficulties, mostly caused by the fact that the maximum principle does not apply for many of the arguments. This should be compared to the case of linear equations when the coefficient of the zeroth-order term does not have a good sign.  

 See \cite{FWW10,FWW15,TW17,TW19, GGQ22, GN23, STW17} for the theory of equations involving $(n-1)$ plurisubharmonic forms and \cite{TW10} for the complex Monge-Amp\`ere equation on balanced manifolds. Theorem \ref{theoremB} is obtained as consequence of the following.

\begin{theorem}
    Assuming that there exists a Hermitian metric $\alpha$ satisfying \eqref{alpha+}, there exists a unique constant $b$ and a unique smooth function $u$ that solves the equation \eqref{CYBeqn}.
\end{theorem}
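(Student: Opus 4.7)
The plan is to prove existence and uniqueness by the continuity method applied to the family
\begin{equation*}
    \det \tilde\omega_{u_t} = e^{t\psi + b_t} \det \alpha, \qquad t \in [0,1],
\end{equation*}
with the normalization $\int_M u_t\, \alpha^n = 0$. At $t=0$ the unique solution is $u\equiv 0$, with $b_0$ determined by $\det \omega_h = e^{b_0} \det \alpha$. The set $T \subset [0,1]$ of $t$ for which a smooth solution exists is nonempty; I will show it is both open and closed.

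For \emph{openness}, I would linearize the equation at a solution $(u,b)$. The linearization in the direction $(v, \beta)$ has schematic form $a^{i\bj} v_{i\bj} + b^i v_i + c\, v - \beta$, where the coefficients $a^{i\bj}$ are uniformly elliptic (by $\tilde\omega_u > 0$) and $c$ comes from the $E u$ term and has no a priori favorable sign. Coupling $v$ to the scalar unknown $\beta$ together with the mean-zero normalization gives a Fredholm problem of index zero on $C^{2,\alpha}$. Surjectivity reduces to showing that the $L^2$-adjoint has one-dimensional kernel spanned by the balanced volume $\omega^{n-1}$; this is where the condition $d\omega^{n-1}=0$ is used, allowing torsion terms to be integrated by parts with no boundary or sign error.

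For \emph{closedness}, uniform a priori $C^{2,\alpha}$ estimates are required. The $C^0$ estimate is the new step, and it is where the sub-Astheno-K\"ahler hypothesis $\pbp \alpha^{n-2}\leq 0$ is used. The strategy would be a Moser iteration: test the equation, after rewriting it as an $(n-1)$-Monge--Amp\`ere equation against $\alpha^{n-1}$, with a nonlinear power of $u$, and integrate by parts to transfer derivatives onto $\alpha^{n-2}$; the resulting curvature-type term has a favorable sign by \eqref{alpha+}, which is precisely what absorbs the contribution of the unsigned $Eu$ term and closes the iteration. Once $\sup|u|$ is controlled, the constant $b$ is pinned down by evaluating at the extrema of $u$ (using that $X\leq 0$ on the $\alpha$-trace of the Hessian term). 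A $C^1$ estimate follows by a blow-up/Liouville argument, and a $C^2$ estimate follows by applying the maximum principle to the largest eigenvalue of the complex Hessian in the manner of Sz\'ekelyhidi--Tosatti--Weinkove, with torsion and the lower-order $\chi(\partial u, \bpartial u) + Eu$ absorbed by the $C^1$ bound; concavity of $\log\det$ then gives Evans--Krylov $C^{2,\alpha}$, and Schauder bootstraps to $C^\infty$.

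For \emph{uniqueness}, the na\"ive maximum principle fails because of the $Eu$ term. Given two solutions $(u_1,b_1)$ and $(u_2,b_2)$, I would set $w=u_1-u_2$, subtract the equations, and use concavity of $\log\det$ to obtain an elliptic inequality for $w$ whose zeroth-order coefficient is $E$. Pairing this inequality with $\alpha^{n-2}$ and integrating by parts again transfers $\pbp$ onto $\alpha^{n-2}$, producing a term of favorable sign by \eqref{alpha+}; together with the normalization $\int_M w\,\alpha^n = 0$ this forces $w\equiv 0$ and hence $b_1=b_2$. The \textbf{main obstacle} throughout is the presence of the unsigned zeroth-order term $Eu$: both the $C^0$ estimate and uniqueness are driven by the same mechanism, namely that the sub-Astheno-K\"ahler condition is exactly the integral identity needed to convert the failure of the pointwise maximum principle into a usable integral inequality after shifting $\pbp$ onto $\alpha^{n-2}$.
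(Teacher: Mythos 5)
Your overall skeleton (continuity method, higher-order estimates quoted from the Gauduchon-conjecture literature, openness by the inverse function theorem) matches what the paper treats as routine, but the heart of the paper's proof is the $C^0$ estimate, and there your mechanism is both different from the paper's and, as sketched, does not work. The paper proves $\sup_M|u|\leq C$ in two ways: (i) by Sz\'ekelyhidi's $\mathcal C$-subsolution/ABP argument, where $\sup u$ comes from the trace inequality \eqref{trace} with $X=tr_\alpha E\leq 0$ and linear theory, and $\inf u$ is obtained by working on a small ball about the minimum on which $u\leq 0$, so that on the upper contact set the term $Eu$ is \emph{pointwise} nonnegative and can simply be discarded, the gradient is small, the $\mathcal C$-subsolution property bounds $u_{i\bj}$, and ABP plus the weak Harnack inequality close the argument; (ii) by the auxiliary Monge--Amp\`ere method of Guo--Phong--Tong, where again the only use of the hypothesis is the pointwise sign $(tr_{\tilde\omega_u}E)\,u\geq 0$ near the minimum. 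In both proofs the sub-Astheno-K\"ahler condition \eqref{alpha+} enters through the pointwise sign of $Eu$ near the extremum, not through an integration by parts. Your Moser-iteration plan to ``transfer $\pbp$ onto $\alpha^{n-2}$ to get a favorable sign'' is not substantiated: the troublesome term $Eu$ \emph{is} $\star\sqrt{-1}\pbp\alpha^{n-2}\cdot u$, so there is no separate curvature term available to absorb it; after testing with a power of $u$ the resulting term $\int u^{p+1}\,\sqrt{-1}\pbp\alpha^{n-2}\wedge(\cdot)$ has no definite sign, because $u$ cannot be normalized to be one-signed (precisely because of the $Eu$ term) and the wedge factor is not a positive form in general; and the gradient terms $\chi(\partial u,\bpartial u)$ are exactly what forced extra symmetry hypotheses in the integral $C^0$ arguments of \cite{STW17}, which the paper avoids by not using an integral method at all. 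So the key new estimate of the theorem is missing from your proposal.

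There is a second, structural mismatch concerning $b$ and uniqueness. The theorem asserts uniqueness of the pair $(b,u)$ with no normalization, and the paper stresses that the usual normalization ($\sup u=0$, or your $\int_M u\,\alpha^n=0$) cannot be imposed when $E\not\equiv 0$, since a translate of a solution no longer solves \eqref{CYBeqn}; its uniqueness argument is a pointwise maximum-principle comparison of the two solution pairs as in \cite{TW17}, with the broken translation invariance ruling out a nonzero constant difference. Your argument builds the normalization into the continuity path and then proves uniqueness only within the normalized class (forcing $w\equiv 0$ from $\int_M w\,\alpha^n=0$), which is a weaker statement than the theorem's; relatedly, your openness discussion (adjoint kernel spanned by the balanced volume $\omega^{n-1}$) describes the $E\equiv 0$ situation, whereas for $E\leq 0$, $E\not\equiv 0$ the zeroth-order coefficient $tr_{\tilde\omega_u}E\leq 0$ already makes the linearization in $u$ alone injective and the Fredholm bookkeeping with the extra scalar unknown $b$ is different. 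Both the $C^0$ step and the uniqueness step therefore need to be replaced by arguments of the kind the paper actually gives.
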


We give two different proofs of this theorem. The second method is more general and will only use that $\mathrm{tr}_{\tilde{\omega}_u} E \leq 0$. This suggests a potential strategy for solving the problem in general, by considering a continuity path along this direction. The assumption $\sqrt{-1} \pbp \omega^{n-2} \leq 0$ seems to be interesting from a geometric perspective. These are discussed in Section \ref{FR}.

In the Section \ref{GE}, the geodesic equation is derived and various properties associated to the balanced condition are shown. In the later sections, we derive interior estimate for $|\p_{tt}|$ and boundary $C^2$ estimates for the solution. In Section \ref{CYB}, we show $C^{0}$ estimates for the balanced Calabi-Yau equation.

\

{\bf Acknowledgements:} I would like to thank Professor Ben Weinkove, Professor Song Sun, Professor Xiangwen Zhang, Nicholas McCleerey, and Bin Guo for helpful discussions.

\

\section{The Geodesic equation}

\label{GE}

Throughout this article, derivatives in the time variable will always be denoted by subscripts in $t$, so that $\nabla \p$, $\Delta \p$ denote only the space derivatives given by the Chern connection of $M$. We begin by deriving several identities satisfied by a balanced metric. 

\begin{lemma}\label{B2}
Assume $d \omega^{n-1}=0$. Then the following are true for any $2 \leq p \leq n$.
\begin{enumerate}[(i)]

    \item \label{B2.1} $\partial \omega^{p-1} \wedge \omega^{n-p} = 0 $

    \item \label{B2.2}$ \pbp \omega^{p-1} \wedge \omega^{n-p} = ( n - p ) (p - 1) \bpartial \omega \wedge \partial \omega \wedge \omega^{n-3}$

    \item \label{B2.3} Define a function $X$ by
    
    $$ X \omega^n = \sqrt{-1} \pbp \omega^{p - 1}\wedge \omega^{n - p}.$$

    Then $X \geq 0$.  
\end{enumerate}
\end{lemma}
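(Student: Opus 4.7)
For part (i), the idea is to extract bidegree consequences from the balanced condition. Since $\omega^{n-1}$ is of type $(n-1,n-1)$, the vanishing $d\omega^{n-1}=0$ splits into $\partial\omega^{n-1}=0$ and $\bpartial\omega^{n-1}=0$. Expanding $\partial\omega^{n-1}=(n-1)\,\partial\omega\wedge\omega^{n-2}$ gives $\partial\omega\wedge\omega^{n-2}=0$, after which $\partial\omega^{p-1}\wedge\omega^{n-p}=(p-1)\,\partial\omega\wedge\omega^{p-2}\wedge\omega^{n-p}=(p-1)\,\partial\omega\wedge\omega^{n-2}=0$ follows immediately.

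Part (ii) is obtained by differentiating (i). Applying $\bpartial$ to the identity $\partial\omega^{p-1}\wedge\omega^{n-p}=0$ and using the graded Leibniz rule, together with $\bpartial\partial=-\pbp$, yields $\pbp\omega^{p-1}\wedge\omega^{n-p}=-\partial\omega^{p-1}\wedge\bpartial\omega^{n-p}$; here the sign $(-1)^{2p-1}=-1$ from commuting $\bpartial$ past $\partial\omega^{p-1}$ combines with the sign flip in $\bpartial\partial=-\pbp$. Next, the plan is to expand $\partial\omega^{p-1}=(p-1)\,\partial\omega\wedge\omega^{p-2}$ and $\bpartial\omega^{n-p}=(n-p)\,\bpartial\omega\wedge\omega^{n-p-1}$, move the even-degree factors of $\omega$ past the three-forms $\partial\omega,\bpartial\omega$ (each such swap contributes $(-1)^{3\cdot\text{even}}=+1$), collapse the $\omega$-powers to $\omega^{n-3}$, and apply $\partial\omega\wedge\bpartial\omega=-\bpartial\omega\wedge\partial\omega$ to land on the stated identity.

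The substance is in part (iii). The key observation is that the identity in (i) is precisely the primitivity of $\partial\omega$ as a $(2,1)$-form, since a form $\alpha\in\Lambda^{p,q}$ is primitive if and only if $\alpha\wedge\omega^{n-p-q+1}=0$, which for $(p,q)=(2,1)$ reads $\partial\omega\wedge\omega^{n-2}=0$. The plan is then to invoke the Hodge--Riemann positivity for primitive forms: for any primitive $\alpha\in\Lambda^{p,q}$, the $(n,n)$-form $(\sqrt{-1})^{p-q}(-1)^{\binom{p+q}{2}}\alpha\wedge\bar\alpha\wedge\omega^{n-p-q}$ is pointwise a non-negative multiple of $\omega^n$. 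Specializing to $(p,q)=(2,1)$ and $\alpha=\partial\omega$ gives $-\sqrt{-1}\,\partial\omega\wedge\bpartial\omega\wedge\omega^{n-3}\geq 0$, equivalently $\sqrt{-1}\,\bpartial\omega\wedge\partial\omega\wedge\omega^{n-3}\geq 0$, and combining with (ii) yields $X\geq 0$.

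The main obstacle is the positivity step in part (iii); parts (i) and (ii) are mechanical consequences of the balanced condition and the Leibniz rule. If one prefers to avoid appealing to the Hodge--Riemann machinery, the alternative I would pursue is a direct local computation in a unitary coframe $\{e^i\}$ at a point: writing $\partial\omega=\sum T_{ij\bar k}\,e^i\wedge e^j\wedge\bar e^k$ with $T_{ij\bar k}=-T_{ji\bar k}$, the balanced condition from (i) translates into the trace identity $\sum_k T_{ik\bar k}=0$, after which one expands $\sqrt{-1}\,\bpartial\omega\wedge\partial\omega\wedge\omega^{n-3}$ and uses the trace vanishing to cancel the indefinite cross-terms, leaving a manifestly non-negative sum of squares $|T_{ij\bar k}|^2$. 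I expect the trace-elimination in this direct route to be the most finicky step.
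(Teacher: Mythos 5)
Your proposal is correct. Parts (i) and (ii) are essentially the paper's argument (the paper expands $\pbp\omega^{p-1}\wedge\omega^{n-p}$ by Leibniz and applies $\bpartial$ to (i) at $p=2$, while you differentiate (i) for general $p$; the bookkeeping in both is the same and your signs check out). For part (iii) you take a genuinely different route: the paper computes in unitary coordinates at a point, expands $\sqrt{-1}\,\bpartial\omega\wedge\partial\omega\wedge\omega^{n-3}$ in terms of the torsion components $T_{ij}^k$, and uses the balanced trace identity $\sum_i T_{ij}^i=0$ to cancel the indefinite cross terms --- exactly the fallback computation you sketch --- arriving at the explicit formula $X=\tfrac{2(n-p)(p-1)}{n(n-1)(n-2)}\sum_{i\neq k}|T_{ij}^k|^2$. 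You instead note that (i) at $p=2$ says $\partial\omega$ is a primitive $(2,1)$-form and invoke the pointwise Hodge--Riemann (Weil identity) positivity for primitive forms; this is legitimate here because that identity is linear algebra on each Hermitian tangent space and needs no K\"ahler or closedness hypothesis, and your sign factor for $(p,q)=(2,1)$ is the right one, giving $-\sqrt{-1}\,\partial\omega\wedge\bpartial\omega\wedge\omega^{n-3}\geq 0$ and hence $X\geq 0$ via (ii). Your argument is shorter and more conceptual (positivity is exactly primitivity of the torsion $3$-form), while the paper's coordinate computation buys an explicit expression for $X$ as a scaled squared norm of torsion components, which is what the paper leans on in the remark that $\mathcal{V}_p=\mathcal{V}_{n-p+1}$ and in the observation that $X\equiv 0$ in the K\"ahler case. (Minor caveat: the primitive-form machinery for a $3$-form presumes $n\geq 3$, but for $n\leq 2$ the statement involving $\omega^{n-3}$ is vacuous and balanced already forces $\partial\omega=0$.)
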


\begin{proof} First two parts are applications of the Leibniz rule.

$$ \partial \omega^{p-1} \wedge \omega^{n-p} = (p - 1) \omega^{p-2} \wedge \omega^{n - p} \wedge \partial \omega = \frac{p-1}{n-1}\partial \omega^{n-1} = 0$$

To get \eqref{B2.2}, we compute

$$ \pbp \omega^{p-1} \wedge \omega^{n-p} = (p - 1)(p - 2) \partial \omega \wedge \bpartial \omega \wedge \omega^{n-3} + (p-1) \pbp \omega\wedge \omega^{n-2}$$

\

Applying $\bpartial$ to \eqref{B2.1} with $p= 2$ gives

$$\pbp \omega \wedge \omega^{n-2} = -(n-2)\; \partial \omega \wedge \bpartial \omega \wedge \omega^{n-3} .$$

\eqref{B2.2} now follows by combining the above two equations.
For showing \eqref{B2.3},
we compute in orthonormal coordinates at a point. Following the convention in \cite{Michelsohn82},

$$\partial \omega = \sqrt{-1} T_{j k }^l dz_j \wedge dz_k \wedge dz_{\bl} $$

$$ \bpartial\omega = - \sqrt{-1} \overline{T_{ i p}^q} dz_{\bi} \wedge dz_{\bp} \wedge dz_{q}$$

\begin{equation}
    \begin{aligned}
        \sqrt{-1}\bpartial \omega \wedge &\partial \omega \wedge \omega^{n-3} =  (\sqrt{-1})^{n} (n-3)!( T_{j k }^l dz_j \wedge dz_k \wedge dz_{\bl})\wedge (\overline{T_{ i p}^q} dz_{\bi} \wedge dz_{\bp} \wedge dz_{q})\wedge \\
        &\left(\sum\limits_{a < b< c}dz_1 \wedge dz_{\bar 1} \wedge \hdots \widehat{dz_{a} \wedge dz_{\overline a}} \hdots \widehat{ \wedge dz_{b} \wedge dz_{\overline b} } \wedge \hdots \wedge \widehat{ dz_{c} \wedge dz_{\overline c} } \wedge \hdots dz_n \wedge dz_{\bar n} \right)\\
        &=  \frac{2}{n (n-1)(n-2)} ( \sum\limits_{j, k, l, j \neq l}|T_{j k }^l|^2 - \sum_{j,k,l}  \left( T_{j k }^j \overline{T_{lk}^l} + T_{jk}^k  \overline{T_{j l}^l} \right)) \omega^n
    \end{aligned}
\end{equation}

Here the $2$ in numerator comes from the anti-symmetry of $T_{ij}^k$ in indices $i$ and $j$. By using that $\sum_iT_{ij}^i=\sum_jT_{ij}^j=0$ for balanced metrics, the second and third terms in the above expression vanishes.

It follows from above and using \eqref{B2.2} that at a point where $g_{i\bj}=\delta_{ij}$,

\begin{equation}\label{B7}
    \begin{aligned}
      \sqrt{-1} \pbp \omega^{p - 1}\wedge \omega^{n - p}= \frac{2(n - p)(p - 1)}{n( n - 1 )(n - 2)}\sum\limits_{i\neq k}|T_{ij}^k|^2\omega^n
    \end{aligned}
\end{equation}

\noindent where $T_{ij}^k$ denote the components of the torsion tensor. This shows that $X \geq 0$.
\end{proof}

\begin{Rmk}
   From \eqref{B7}, we can also make the following observation.

   $$\mathcal{ V }_p = \mathcal{ V }_{n - p +1} $$ 
   \noindent for all $p$. This holds since the expression for $X$ is symmetric with respect to this transformation. Also see \eqref{B3}. 
\end{Rmk}

We now derive the equation of a geodesic segment joining $\phi_0$ to $\phi_1$ in $\mathcal{V}_p$ by minimizing the following energy functional.

\begin{equation}\label{G1}
    \begin{aligned}
    \mathcal{E}^2=\int_{0}^1|| \p_t ||^2dt=\int_{0}^1\int_M \p_t ^2 \;\Omega_{\phi} \wedge \omega^{n-p} dt
    \end{aligned}
\end{equation}

Let $\phi^s(t, .)$ be an end-point fixing variation of paths in $\mathcal{V}_p$ such that $\phi^s(.,0)=\phi_0(.)$ and $\phi^s(.,1)=\phi_1(.)$, with $s \in [-1, 1]$.

Using Lemma \ref{B2},

\begin{equation}\label{B3}
    \begin{aligned}
    \Omega_{\phi}\wedge \omega^{n-p} = \omega^n+\frac{1}{n}\Delta\phi\omega^n+ X \phi \omega^n.
    \end{aligned}
\end{equation}

Now the energy becomes

\begin{equation}\label{B4}
    \begin{aligned}
    \mathcal{E}^2=\int_0^T\int_M \p_t ^2\left(1+\frac{\Delta\phi}{n}\right)\omega^n+ \p_t ^2\phi X \omega^n dt
    \end{aligned}
\end{equation}
Assuming that $\phi^0 = \p$ minimizes $\mathcal{E}$, we have

\begin{equation}\label{G2}
    \begin{aligned}
    0&=\left.\frac{\partial }{\partial s}\mathcal{E}^2\; \right\vert_{s=0}\\
    &=\int_0^T\int_M 2 \p_t  \psi_t \left(1+\frac{\Delta\phi}{n}\right)+ \p_t ^2\frac{\Delta\psi}{n}\omega^n+\left(2 \p_t  \psi_t \phi+ \p_t ^2\psi\right) X \omega^n dt
    \end{aligned}
\end{equation}

\noindent where $\psi=\left.\frac{\partial }{\partial s}\phi\; \right\vert_{s=0}$ is the variational field. Performing standard variational calculus gives the following geodesic equation.

\begin{equation}\label{B5}
    \begin{aligned}
    & \p_{ t t } ( n +  n X \p + \Delta \phi ) -|\nabla \p_t |^2 + \frac{n X \p_t^2 }{2}  =0
    \end{aligned}
\end{equation}

\noindent with $\phi(.,0)=\phi_0$ and $\phi(.,1)=\phi_1$. An important point here is that integration by parts uses the balanced condition and hence this construction will not generalize easily to any Hermitian metric. From now on we will use the notation $\phi(z,t)$ to denote the geodesic segment joining $\phi_0$ and $\phi_1$.

\section{Preliminaries}

\label{C}

In this section, we will introduce some basic lemmas and the setup for continuity method. Assume $X\leq 0$ . Let 

$$A( \p ) = n + n X \p + \Delta \p,$$
      \noindent and
        
        $$G(\p) = \phi_{ t t } A(\p ) - \sum_k |\p_{ k t}|^2.$$

       Note that $G(\p)>0$ for a solution $\p$. We also denote  $L(\phi) = \epsilon - \dfrac{n X \p_t^2}{2} > 0.$

\

 Greek indices are used to denote both space and time variables whereas English indices are for space variables only. Denote

\begin{equation}
    \begin{aligned}
        F^{\al \obe} = \frac{\partial F}{ \partial \p_{\al \obe}},\hspace{1cm} F^{\al \obe, \ga \ode } = \frac{\partial^2 F}{\partial \p_{\al \obe } \partial \p_{ \ga \ode}}.
    \end{aligned}
\end{equation}

Consider the function $f : \mathbb{R}^{n+2} \to \mathbb{R} $ given by

$$f(x, y, z_1, z_2, \hdots, z_n ) = \log{ ( xy-\sum_k z_k^2 ) } $$

 It was proven in \cite{Donaldson10}, and later also in \cite{CH11} that 

 \begin{lemma}
    $ f(x, y, z_1, z_2, \hdots, z_n )$ is concave in the set where $x>0$, $y>0$, and  $xy-\sum_k z_k^2 >0 $.
 \end{lemma}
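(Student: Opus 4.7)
The plan is to prove concavity by computing the Hessian $D^2 f$ directly and showing it is negative semidefinite on the open domain. Writing $Q = xy - \sum_k z_k^2$ so that $f = \log Q$, the standard identity
\[
f_{\alpha\beta} = \frac{Q_{\alpha\beta}}{Q} - \frac{Q_\alpha Q_\beta}{Q^2}
\]
shows, after multiplying by $Q^2 > 0$, that the concavity inequality $f_{\alpha\beta}\eta^\alpha \eta^\beta \leq 0$ is equivalent to
\[
Q \cdot \bigl(Q_{\alpha\beta}\eta^\alpha\eta^\beta\bigr) \leq \bigl(Q_\alpha \eta^\alpha\bigr)^2
\]
for every test vector $\eta = (a, b, c_1, \ldots, c_n)$.

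The key observation is that $Q$ is the diagonal of a Lorentzian bilinear form. Setting
\[
L(v, v') := \tfrac{1}{2}(ab' + a'b) - \sum_k c_k c'_k,
\]
one has $L(p, p) = Q$, $Q_\alpha \eta^\alpha = 2L(p, \eta)$, and $Q_{\alpha\beta}\eta^\alpha\eta^\beta = 2L(\eta, \eta)$, where $p = (x, y, z_1, \ldots, z_n)$. The matrix of $L$ has signature $(1, n+1)$: the $(x,y)$-block contributes one positive and one negative eigenvalue, while the $z$-block is $-I_n$. With this identification the inequality above becomes the reverse Cauchy--Schwarz-type statement
\[
2\, L(p, \eta)^2 \geq L(p, p)\, L(\eta, \eta).
\]

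If $L(\eta, \eta) \leq 0$, the right-hand side is nonpositive (since $L(p, p) > 0$ in the domain) and the inequality is immediate. If $L(\eta, \eta) > 0$, I would establish the stronger reverse Cauchy--Schwarz bound $L(p, \eta)^2 \geq L(p, p)\, L(\eta, \eta)$ by restricting $L$ to the two-plane $\operatorname{span}(p, \eta)$: since the ambient form has only one positive direction, Sylvester's law of inertia forces the restriction to have signature $(1, 1)$ (or to be degenerate), and on any two-dimensional Lorentzian space one checks directly in diagonalizing coordinates that
\[
L(p, \eta)^2 - L(p, p)\, L(\eta, \eta) = (p_0 \eta_1 - p_1 \eta_0)^2 \geq 0.
\]

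The only potentially delicate step is recognizing the Hessian structure as a Lorentzian reverse Cauchy--Schwarz problem; once the identification with $L$ is in hand, the case split on the sign of $L(\eta, \eta)$ and the two-dimensional reduction are standard linear algebra. No positivity assumption is needed on $\eta$, which is essential since the tangent direction in the concavity statement is arbitrary.
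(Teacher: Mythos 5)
Your proof is correct, but it is not the route the paper takes: the paper offers no argument at all for this lemma, simply citing Donaldson and Chen--He, where the concavity is verified by a direct computation showing the Hessian of $\log(xy-\sum_k z_k^2)$ is negative semidefinite. Your approach repackages that computation structurally: recognizing $Q=xy-\sum_k z_k^2$ as a Lorentzian quadratic form of signature $(1,n+1)$ reduces the Hessian inequality $Q\,(Q_{\alpha\beta}\eta^\alpha\eta^\beta)\le (Q_\alpha\eta^\alpha)^2$ to the reverse Cauchy--Schwarz inequality $2L(p,\eta)^2\ge L(p,p)L(\eta,\eta)$, which is essentially G\aa rding's inequality for a hyperbolic quadratic, and your two-plane restriction argument is the standard proof of it. What this buys is a coordinate-free explanation of \emph{why} the log of this particular quadratic is concave on its positivity cone (only one positive direction in the signature), at the cost of a slightly longer setup than the brute-force Hessian check. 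Two small points you should make explicit to be airtight: (a) when $p$ and $\eta$ are linearly dependent, or when the restriction of $L$ to $\operatorname{span}(p,\eta)$ is degenerate, the reverse Cauchy--Schwarz inequality holds with equality --- equivalently, note that since $L(p,p)>0$ the Gram matrix of $L$ on the pair $(p,\eta)$ cannot be positive definite, so its determinant $L(p,p)L(\eta,\eta)-L(p,\eta)^2\le 0$ in all cases; and (b) pointwise negative semidefiniteness of $D^2f$ yields concavity of $f$ as a function because the set $\{x>0,\ y>0,\ xy>\sum_k z_k^2\}$ is convex (it is the strict epigraph in $y$ of the jointly convex function $\sum_k z_k^2/x$ on $\{x>0\}$), a fact worth one sentence since $\{Q>0\}$ alone is not convex.
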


 We need an extension of this lemma to the complex case. That is, $- \log{ (xy-\sum_k z_k \bar z_k ) }$ is plurisubharmonic. This follows directly from the following proposition. See Theorem 5.6 in \cite{Demailly12}. 

 \begin{prop}
    Let $u_1, \hdots ,u_p$ be plurisubharmonic functions defined in a domain $ \Omega $ and $ \chi: \mathbb{R}^p \to \mathbb{R} $ be a convex function such that $\chi(t_1, \hdots , t_p)$ is non-decreasing in each $t_j$. Then $\chi(u_1, \hdots, u_p)$ is plurisubharmonic on $\Omega$.
 \end{prop}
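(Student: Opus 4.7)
The plan is to first verify the claim when $u_1,\ldots,u_p$ and $\chi$ are smooth by a direct computation of the complex Hessian, and then pass to the general case via regularization.

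In the smooth case, I would use the chain rule to write
\[
\partial_i \partial_{\bar j} \chi(u_1,\ldots,u_p) = \sum_{k} \chi_{t_k}(u)\, \partial_i \partial_{\bar j} u_k + \sum_{k,\ell} \chi_{t_k t_\ell}(u)\, \partial_i u_k\, \partial_{\bar j} u_\ell,
\]
and show this Hermitian matrix is positive semi-definite. Contracting the first sum with $\xi_i\bar\xi_j$ for arbitrary $\xi \in \mathbb{C}^n$ yields a nonnegative quantity because $\chi_{t_k}\ge 0$ by monotonicity and each complex Hessian of $u_k$ is positive semi-definite by plurisubharmonicity. For the second sum, the essential observation is that since each $u_k$ is real, setting $a_k = \sum_i(\partial_i u_k)\xi_i$ gives $\sum_j (\partial_{\bar j} u_k)\bar\xi_j = \overline{a_k}$, so the contraction becomes the Hermitian form $\sum_{k,\ell}\chi_{t_k t_\ell}\, a_k\overline{a_\ell}$. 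Since the real symmetric matrix $(\chi_{t_k t_\ell})$ is positive semi-definite by convexity of $\chi$, a standard splitting into real and imaginary parts of $a_k$ shows this Hermitian form is nonnegative.

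For the general case, I would regularize: let $u_k^\epsilon = u_k \ast \rho_\epsilon$ be standard mollifications, which are smooth, plurisubharmonic, and decrease pointwise to $u_k$ on relatively compact subsets as $\epsilon \downarrow 0$; similarly approximate $\chi$ by smooth convex non-decreasing $\chi^\delta$ with $\chi^\delta \searrow \chi$ (standard mollification on $\mathbb{R}^p$ preserves convexity, and monotonicity in each argument can be arranged). The smooth case then gives that $\chi^\delta(u_1^\epsilon,\ldots,u_p^\epsilon)$ is plurisubharmonic; by monotonicity of $\chi^\delta$ in each argument, these compositions decrease to $\chi(u_1,\ldots,u_p)$, and a decreasing limit of plurisubharmonic functions is plurisubharmonic. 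Upper semi-continuity of the limit is automatic from continuity of $\chi$ on its domain together with monotonicity.

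I do not anticipate a serious obstacle: the only nontrivial ingredient is the linear-algebraic fact that a real symmetric positive semi-definite matrix induces a nonnegative Hermitian form on $\mathbb{C}^p$, which works precisely because each $u_k$ is real so that $\partial_{\bar j}u_k = \overline{\partial_j u_k}$. The regularization step is standard once that is in hand.
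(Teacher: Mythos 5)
Your argument is correct. Note, however, that the paper does not prove this proposition at all: it is quoted as Theorem 5.6 of Demailly's lecture notes, so the comparison is really with the standard reference proof. Demailly's route verifies the sub-mean-value inequality directly -- by monotonicity of $\chi$ and Jensen's inequality, $\chi(u_1,\hdots,u_p)(a)$ is bounded by $\chi$ applied to the circle averages of the $u_k$, hence by the circle average of $\chi(u_1,\hdots,u_p)$ -- which avoids any regularization and handles points where some $u_k=-\infty$ automatically (equivalently, one can write $\chi$ as a supremum of affine functions with nonnegative slopes). Your route -- the chain-rule identity
\[
\partial_i\partial_{\bar j}\,\chi(u)=\sum_k \chi_{t_k}(u)\,\partial_i\partial_{\bar j}u_k+\sum_{k,\ell}\chi_{t_k t_\ell}(u)\,\partial_i u_k\,\partial_{\bar j}u_\ell,
\]
with the observation that a real symmetric positive semi-definite $(\chi_{t_k t_\ell})$ gives a nonnegative Hermitian form because $\partial_{\bar j}u_k=\overline{\partial_j u_k}$, followed by mollification of both the $u_k$ and $\chi$ -- is a perfectly valid alternative and is the computation one actually needs in applications such as \eqref{plurisub}, where everything is smooth. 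Two small points to tidy up: your statement tacitly assumes the $u_k$ are finite-valued (since $\chi$ is defined only on $\mathbb{R}^p$; at points with $u_k=-\infty$ one must use the monotone extension of $\chi$), and the double limit in $\epsilon$ and $\delta$ should be taken in a fixed order (first $\epsilon\downarrow 0$ with $\delta$ fixed, using continuity of $\chi^{\delta}$, then $\delta\downarrow 0$, using $\chi^{\delta}\searrow\chi$ for a radial mollifier), so that at each stage one has a decreasing limit of plurisubharmonic functions; with those caveats the regularization step goes through as you describe.
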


 It follows from the above two results that the function $g : \mathbb{R}^2 \times \mathbb{C}^{n} \to \mathbb{R}$ given by

 \begin{equation}\label{plurisub}
    g(x, y, z_1, \hdots , z_n)= -\log( x y - \sum_k |z_k|^2 )
 \end{equation}

\noindent is  plurisubharmonic in $\bbC^n$ when $ x, y \in \mathbb R^+$ and $ x y  - \sum_k |z_k|^2>0 $.

\

Denote the nonlinear operator  

\begin{equation}\label{C1}
F( D^2 \phi, \phi, z) = \phi_{tt} ( n + n X \phi + \Delta \phi ) - | \nabla \phi_t |^2. 
\end{equation}

The continuity path is given by

\begin{equation}\label{C2}
   P_s(  D^2 \phi,  \phi, z ) = s F( D^2 \phi, \phi, z ) + ( 1 - s ) ( \phi_{tt} + A( \phi) ) = \epsilon - \frac{nsX}{2} \p_t^2.
\end{equation}

To show existence, it is enough to show that there is a unique smooth solution for the Dirichlet problem

\begin{equation} \label{C3}
\begin{aligned}
      & P_s(  D^2 \phi, D \phi, z ) = \epsilon - \frac{nsX}{2} \p_t^2 \\
      & \phi (. , 0) = \phi_0, \; \phi(. , 1) = \phi_1 \\
\end{aligned}
\end{equation}

\noindent for each $s \in [0,1] $. Let $S = \{ s \in [0,1] | \text{ \eqref{C3} has a unique smooth solution for } [0,s) \} $. Clearly $0 \in S$ and by implicit function theorem there is a $\delta >0 $ such that $[ 0, \delta) \subset S$. For showing that $1 \in S$ and hence the equation \eqref{C3} has a smooth solution, we need to derive a priori estimates up to boundary for \eqref{C3}.

For simplicity, consider the equation at $s=1$. That is, the equation

\begin{equation}\label{C3.1}
    F( D^2 \p, \p, z) = L.
\end{equation}

The calculations for general $s$ are similar. The linear operator associated to $F$ at some $\phi$ is given by

\begin{equation} \label{C4}
    \mathcal{L} u =  A( \phi ) u_{tt} + \phi_{tt} \Delta u - 2 \fRe{(\p_{t \bk} u_{t k})} 
\end{equation}

 It follows that the principal symbol can be written as the following $(n+1) \times (n+1)$ matrix.

\begin{equation}\label{C5}
\begin{bmatrix}
     A(\p) & - \nabla_1 \phi_t & \cdots & -\nabla_n\p_t \\
     - \nabla_{ \bar{1} } \phi_t &  \p_{t t} & 0 & 0\\
     \vdots & 0 & \ddots & 0\\
     - \nabla_{ \bar{n} } \phi_t & 0 & 0&  \p_{t t}
\end{bmatrix}
\end{equation}

We prove some basic results that will be useful later.

\begin{lemma} \label{ellipticity}
    $ F( D^2 \p, \p, z ) $ is elliptic at a solution $\p$ of \eqref{C3.1}.
\end{lemma}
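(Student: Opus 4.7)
The plan is to verify positive definiteness of the $(n+1)\times(n+1)$ principal symbol matrix in \eqref{C5} directly, by a Schur complement argument, using only the equation and the positivity built into $\mathcal{V}_p$. There is no nontrivial global argument here; the whole content is that the two obvious quantities $A(\phi)$ and $\phi_{tt}$ are positive at a solution.

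First, I would observe that since $\phi$ solves \eqref{C3.1} and $X\le 0$, we have
\begin{equation}\label{CL0}
G(\phi) \;=\; \phi_{tt}A(\phi) - \sum_k |\phi_{kt}|^2 \;=\; L(\phi) \;=\; \epsilon - \frac{nX\phi_t^2}{2} \;>\; 0.
\end{equation}
Next I would record that $A(\phi) > 0$: this is exactly the condition $\phi \in \mathcal{V}_p$ (combining \eqref{B3} with the definition \eqref{I0.1}), which persists along the continuity path \eqref{C3} by the openness of this condition and is preserved under the estimates we aim to derive. Plugging $A(\phi) > 0$ together with $\sum_k|\phi_{kt}|^2 \ge 0$ into \eqref{CL0}, we immediately get
\[
\phi_{tt} \;>\; \frac{1}{A(\phi)}\sum_k |\phi_{kt}|^2 \;\ge\; 0.
\]

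Now I would decompose the $(n+1)\times(n+1)$ matrix in \eqref{C5} as a $1{+}n$ block matrix with diagonal blocks $A(\phi)$ and $\phi_{tt}\,I_n$. Since $\phi_{tt}>0$, the lower-right block is positive definite, so the whole matrix is positive definite if and only if the Schur complement of the lower-right block is positive. That Schur complement is
\[
A(\phi) \;-\; \sum_k (-\nabla_k \phi_t)\,\frac{1}{\phi_{tt}}\,(-\nabla_{\bar{k}} \phi_t) \;=\; \frac{1}{\phi_{tt}}\Bigl(\phi_{tt}A(\phi) - \sum_k |\phi_{kt}|^2\Bigr) \;=\; \frac{G(\phi)}{\phi_{tt}},
\]
which is strictly positive by \eqref{CL0}.

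Hence every eigenvalue of the symbol is strictly positive, so $\mathcal{L}$ is uniformly elliptic at $\phi$. The only mildly delicate point is remembering that $A(\phi)>0$ is not an output of the equation but an input coming from the ambient space $\mathcal{V}_p$; this is why one must work with the perturbed equation \eqref{pde} and propagate positivity along the continuity path rather than deriving ellipticity from \eqref{C3.1} alone.
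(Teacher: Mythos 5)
Your proof is correct and follows essentially the same route as the paper: both reduce ellipticity to positive definiteness of the symbol matrix \eqref{C5} using the strict inequality $\sum_k|\p_{kt}|^2 < \p_{tt}A(\p)$ supplied by the equation \eqref{C3.1}, the paper estimating the quadratic form $F^{\al\obe}\xi_\al\ol{\xi_\be}$ directly via Cauchy--Schwarz while you package the identical positivity as a Schur complement $G(\p)/\p_{tt}>0$. Your closing remark that $A(\p)>0$ is an input (membership in $\mathcal{V}_p$, preserved along the continuity path) rather than a consequence of the equation is a point the paper's proof uses implicitly as well, so it is a fair clarification rather than a divergence.
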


\begin{proof}
    We show this by proving that the matrix \eqref{C5} is positive-definite.

From \eqref{C3.1}, 

\begin{equation}\label{C'5.1}
    \sum |\p_{ t k }|^2 < \p_{ t t } A(\p)
\end{equation}

Given any vector $\xi \in \mathbb{C}^n$, we can compute

\begin{equation}
    \begin{aligned}
        F^{\al \obe } \xi_{\al} \ol{ \xi_{\be} } = A(\p) |\xi_t|^2 + \p_{ t t } \sum_k |\xi_k|^2 - \sum_{k} (\p_{ k t } \xi_{\bk } \xi_{t} + \p_{ \bk t } \xi_{ k } \xi_{t} )
    \end{aligned}
\end{equation}

From \eqref{C'5.1},
\begin{equation}
\begin{aligned}
    \sum_k \p_{ k t } \xi_{\bk } \xi_{t} & \leq \sqrt{ \sum_k |\p_{ k t }|^2 } \sqrt{\sum_k |\xi_k |^2 } \xi_t\\
    & <  \frac{1}{2} \left(\p_{ t t } \sum_k |\xi_k |^2 + A(\p)| \xi_t |^2 \right)
\end{aligned}
\end{equation}

It follows that $F^{\al \obe } \xi_{\al} \ol{ \xi_{\be} } > 0 .$
    
\end{proof}

\begin{lemma}\label{concavity}
    Let $F: \mathbb{S}^{ n+1} \to \mathbb{R}$ be a function defined on the space of symmetric matrices as follows
\begin{equation}\label{C5.1}
    F(A) = A^{00} \sum\limits_{i=1}^n A^{ i i } - \sum\limits_{i=1}^n ( A^{ i 0 } )^2.
\end{equation}

Then 
    \begin{enumerate}
        \item \label{concavity1} $F$ is concave.
        
        \item \label{concavity2}  For all $B$ such that $F(B) > F(A)$,
        
        \begin{equation} \label{C5.2}
            \sum F^{ i \bj } ( B_{i \bj} - A_{i \bj} ) \geq \epsilon \sum F^{i \bi}
        \end{equation}
        
        \noindent for some small $\epsilon>0$.
    \end{enumerate}

\end{lemma}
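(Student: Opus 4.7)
The plan is to read both parts of the lemma through the tangent-plane inequality coming from concavity of $F$ on the elliptic cone $\mathcal{C} = \{A : A^{00} > 0,\ A^{ii} > 0,\ F(A) > 0\}$ singled out in Lemma \ref{ellipticity}. Since $F$ is a pure quadratic form in the matrix entries, the key identity is
\begin{equation*}
F(A + V) = F(A) + \sum F^{\alpha\beta}(A)\,V_{\alpha\beta} + F(V),
\end{equation*}
so concavity of $F$ restricted to $\mathcal{C}$ amounts to $F(V) \leq 0$ for every admissible variation $V = B - A$ with $A, B \in \mathcal{C}$. On the full symmetric matrix space the Hessian of $F$ is indefinite (the nonzero entries are $F^{00, ii} = 1$ and $F^{i0, i0} = -2$), so the concavity of (1) must be read on $\mathcal{C}$, via the stronger log-concavity established in the preceding proposition on the plurisubharmonicity of $g(x, y, z_1, \ldots, z_n) = -\log(xy - \sum |z_k|^2)$.

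For part (1), I would give two equivalent routes. Route A: specialize the preceding proposition by choosing $p = 1$, noting that $-\log F$ is the real restriction of a plurisubharmonic composition, and conclude that $\log F$ is concave on $\mathcal{C}$; this concavity passes to $F$ on the admissible directions by the $\sigma_2^{1/2}$-type calibration familiar from Hessian operators. Route B: compute the Hessian of $F^{1/2}$ directly in the coordinates $x = A^{00}$, $y = \sum_i A^{ii}$, $z_k = A^{k0}$ and verify negative semi-definiteness on $\mathcal{C}$ using an AM--GM / Cauchy--Schwarz bound of the form $2 |z_k t| \leq x \lambda + y t^2/\lambda$; combined with the expansion displayed above this gives $F(V) \leq 0$ for $V = B - A$ with $A, B \in \mathcal{C}$, hence concavity of $F$ in the intended sense.

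For part (2), I would use the tangent-plane inequality produced by (1):
\begin{equation*}
F(B) \leq F(A) + \sum F^{\alpha\beta}(A)(B - A)_{\alpha\beta}.
\end{equation*}
The hypothesis $F(B) > F(A)$ only yields positivity of the right-hand side; to reach the quantitative bound $\epsilon \sum F^{i\bar i}(A)$ I would perturb $B$ to $B' := B - \epsilon I_{\text{sp}}$, where $I_{\text{sp}}$ projects onto the spatial diagonal. The expansion $F(B') = F(B) - \epsilon n A^{00}(B) + O(\epsilon^2)$, together with the hypothesis $F(B) > F(A)$ and the uniform control on $A^{00}(B)$ on $\mathcal{C}$, permits choosing $\epsilon$ small enough (depending only on the gap $F(B) - F(A)$ and ambient bounds) so that $F(B') \geq F(A)$ still holds. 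Re-applying the tangent-plane inequality to $(A, B')$ and rearranging using $(F^{\alpha\beta})(I_{\text{sp}})_{\alpha\beta} = \sum_i F^{i\bar i}(A)$ then yields the claim.

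The main obstacle I anticipate is setting up part (1) cleanly: ``$F$ is concave'' must be interpreted on $\mathcal{C}$ (since the full Hessian of $F$ is indefinite), and bridging from the plurisubharmonicity proposition (which gives log-concavity in complex variables) to the real matrix-concavity statement used in (2) requires the $\sigma_2^{1/2}$-style identification and some bookkeeping for the symmetric matrix entries. Once part (1) is formulated correctly on $\mathcal{C}$, part (2) is the standard concavity-plus-perturbation argument and is the piece directly consumed in the later $C^2$ estimates.
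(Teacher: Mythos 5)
Your proposal follows essentially the same route as the paper: part (1) is deferred to the concavity statement already recorded for $f(x,y,z)=\log(xy-\sum_k z_k^2)$ (the paper simply cites \cite{Donaldson10}), and part (2) is exactly the paper's argument --- since the superlevel set $\Gamma^{F(A)}=\{F>F(A)\}$ is open, replace $B$ by $B-\epsilon I$ (your spatial projection $I_{\mathrm{sp}}$ versus the full identity is immaterial here) and apply a tangent-plane inequality, absorbing the $\epsilon\delta_{ij}$ term into $\epsilon\sum F^{i\bi}$.

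One concrete correction to your Route B: the claim that $F(V)\le 0$ for every $V=B-A$ with $A,B\in\mathcal{C}$ is false; take $B=2A$, so $V=A$ and $F(V)=F(A)>0$. By the quadratic expansion $F(A+V)=F(A)+F^{\al\obe}(A)V_{\al\obe}+F(V)$ this means the tangent-plane inequality for $F$ itself genuinely fails on the cone (the same imprecision appears in the paper's display \eqref{C5.3}, and in the literal wording of part (1)). What is true, and what both you and the paper actually need, is concavity of $\log F$ (equivalently $F^{1/2}$) on $\mathcal{C}$: writing
\begin{equation*}
xy-\sum_k|z_k|^2=x\Bigl(y-\tfrac{1}{x}\sum_k|z_k|^2\Bigr)
\end{equation*}
exhibits $F^{1/2}$ as a geometric mean of a linear and a concave function on the convex cone $\{x>0,\ y>\sum_k|z_k|^2/x\}$. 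The tangent-plane inequality for $\log F$ then gives
\begin{equation*}
F^{\al\obe}(A)\,(B'-A)_{\al\obe}\ \ge\ F(A)\,\log\!\bigl(F(B')/F(A)\bigr)\ \ge\ 0
\end{equation*}
for $B'=B-\epsilon I$ still lying in $\Gamma^{F(A)}$, which is all that part (2) requires; the remainder of your perturbation argument goes through verbatim and coincides with the paper's.
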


\begin{proof}
    For part one refer to \cite{Donaldson10}.
Part two is a special case of Theorem $2.17$ from \cite{Guan14}. We give a simpler proof here.

Define 
$$ \Gamma^{ \sigma } = \{ A: F(A) > \sigma \} $$

Then since $B \in \Gamma^{F(A)}$, there exists an $\epsilon > 0$ such that $ B - \epsilon I \in \Gamma^{F(A)}$. By concavity of $F$,

\begin{equation} \label{C5.3}
    F^{ i \bj }( B_{i \bj} - \epsilon \delta_{i j} - A_{i \bj} ) \geq F(B - \epsilon I) - F(A) >0.
\end{equation}

Hence \eqref{C5.2} follows.

\end{proof}

As a consequence of \eqref{C5.2},

\begin{equation} \label{C5.4}
\begin{aligned}
    \mathcal{L} ( \ul{\p} - \p ) &\geq \epsilon_1 \sum F^{ \al \oal}\\
    &= \epsilon_1 ( n + n X \p + \Delta \p + n \p_{ t t }),
    \end{aligned}
\end{equation}

\noindent for some positive constant $\epsilon_1$.

\section{ $C^0$ and $\p_{t}$ Estimates}

\label{G}

Assuming the existence of a subsolution, we show that any solution of \eqref{C3.1} is bounded. By maximum principle, it is clear that they are bounded above.

\begin{prop}
    A $C^2$ solution $\p$ to  \eqref{C3.1} satisfies

    \begin{equation} \label{C6}
        \ul{ \phi } \leq \phi \leq \ol{ \phi }
    \end{equation}

    \noindent for some smooth bounded function $ \ol{ \phi } $.
\end{prop}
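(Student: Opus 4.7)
The plan is to establish the two inequalities separately.

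For the upper bound I would use a direct maximum principle. Ellipticity of the equation (Lemma \ref{ellipticity}) gives $A(\p), \p_{tt} > 0$ throughout $Y$, so at any would-be interior maximum $p_0$ of $\p$ one would have $\p_{tt}(p_0) \le 0$, an immediate contradiction. Equivalently, $\p_{tt}(p_0) \le 0$ together with $A(\p)(p_0) > 0$ forces $F(D^2\p,\p,z)(p_0) = \p_{tt}A(\p) - |\nabla\p_t|^2 \le 0$, contradicting $F(\p) = L(\p) \ge \epsilon > 0$ (where $X \le 0$ is used to keep $L \ge \epsilon$). Hence $\sup_Y \p \le \max(\sup_M \p_0, \sup_M \p_1) =: M_0$, and one can take $\ol\p$ to be any smooth function dominating $M_0$, e.g.\ the constant $M_0$ itself.

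For the lower bound I would compare $\p$ against $\ul\p$ along the affine interpolation $\p_\tau := \tau \ul{\p} + (1-\tau)\p$, $\tau \in [0,1]$. Setting $w := \ul{\p} - \p$ (which vanishes on $\partial Y$), assume for contradiction that $w$ attains a positive interior maximum at a point $p_0$, so that $Dw(p_0) = 0$ and $D^2 w(p_0) \le 0$ as a real symmetric matrix. Applying the fundamental theorem of calculus to $(F - L)(\p_\tau)$ along the path yields
\begin{equation*}
    [F(\ul{\p}) - L(\ul{\p})] - [F(\p) - L(\p)] = \int_0^1 \bigl[\mathcal{L}_\tau w + nX\p_{\tau,t}w_t\bigr]\,d\tau,
\end{equation*}
where $\mathcal{L}_\tau$ denotes the full linearization of $F$ at $\p_\tau$, i.e.\ the operator in \eqref{C4} with $\p$ replaced by $\p_\tau$, together with the zeroth-order contribution $nX\p_{\tau,tt}$. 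At $p_0$ the identity $w_t(p_0) = 0$ gives $L(\ul{\p})|_{p_0} = L(\p)|_{p_0}$ and kills the first-derivative correction, so the left-hand side reduces to $F(\ul{\p})(p_0) - L(\ul{\p})(p_0) > 0$ by the strict subsolution inequality \eqref{subsolution}. Consequently $\int_0^1 \mathcal{L}_\tau w(p_0)\,d\tau > 0$.

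The contradiction will follow from showing $\mathcal{L}_\tau w(p_0) \le 0$ for every $\tau \in [0,1]$. The zeroth-order piece $nX\p_{\tau,tt}w(p_0) \le 0$ since $X \le 0$, $w(p_0) > 0$, and $\p_{\tau,tt} > 0$ (both endpoint values $\p_{tt}$ and $\ul{\p}_{tt}$ being positive). The principal part is nonpositive by pairing the positive definite principal symbol against the negative semidefinite complex Hessian of $w$ at $p_0$, provided the symbol—the matrix in \eqref{C5} with $\p$ replaced by $\p_\tau$—remains positive definite along the whole segment. This positivity along the interpolation is the main obstacle. A Schur-complement characterization reduces it to $\p_{\tau,tt} > 0$ (immediate) together with $F(\p_\tau) > 0$; the latter I would obtain from the concavity of $\log\bigl(xy - \sum_k |z_k|^2\bigr)$ on the admissible cone, which is a consequence of the plurisubharmonicity recorded in \eqref{plurisub} and Lemma \ref{concavity}. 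Since the arguments $(\p_{\tau,tt}, A(\p_\tau), \p_{\tau,tk})$ depend affinely on $\tau$, concavity yields
\begin{equation*}
    \log F(\p_\tau) \ge \tau \log F(\ul{\p}) + (1-\tau) \log F(\p) > -\infty,
\end{equation*}
so $F(\p_\tau) > 0$ throughout the segment, completing the argument.
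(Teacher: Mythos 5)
Your proof is correct and reaches the same conclusion by a genuinely different route, under the same standing convention the paper uses (solution and subsolution lie in the elliptic branch $\p_{tt}>0$, $A(\p)>0$; the inequalities $G(\p)>0$ and \eqref{subsolution} alone only fix the common sign of the two factors). For the upper bound, the paper does not argue at an interior maximum of $\p$: it takes $\ol\p$ to be the solution of the linear Dirichlet problem $n+u_{tt}+\Delta u+nXu=0$ with data $\p_0,\p_1$, notes that $\p$ is a subsolution of that problem, and applies the maximum principle. Your convexity-in-$t$ argument (an interior maximum would force $\p_{tt}\le 0$) is simpler and proves the stated bound with a constant barrier; the trade-off is that the paper's $\ol\p$ has the same boundary values as $\p$, which is exactly what is reused in the next proposition to bound $\p_t$ at $t=0,1$, a service the constant cannot provide. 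For the lower bound, the paper argues pointwise at the maximum $q$ of $\ul\p-\p$: since $\ul\p_t=\p_t$ there, $F(\ul\p)>L(\ul\p)=L(\p)=F(\p)$, and then concavity (Lemma \ref{concavity}) gives $\mathcal L(\ul\p-\p)>0$, contradicting ellipticity (Lemma \ref{ellipticity}) at a maximum; your integral linearization along $\p_\tau=\tau\ul\p+(1-\tau)\p$ reaches the same contradiction using only positive definiteness of the symbol \eqref{C5} along the segment, the sign $X\le 0$ of the zeroth-order term, and the same key observation $w_t(p_0)=0$. This is arguably more robust, since $F$ itself has a constant indefinite Hessian and is not concave (what is true, and what the paper's step really uses, is concavity of $\log F$ on the cone), so avoiding Lemma \ref{concavity}(1) is a genuine simplification. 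One small logical slip to patch: you deduce $F(\p_\tau)>0$ from concavity of $\log\bigl(xy-\sum_k|z_k|^2\bigr)$, but to evaluate that function along the segment you must already know the segment lies in its domain, which is the very point at issue; the correct (and easier) justification is that the admissible set $\{x>0,\ y>0,\ xy-\sum_k|z_k|^2>0\}$ is convex --- it is precisely the set where the arrow matrix \eqref{C5} is positive definite, as your own Schur-complement remark shows --- and the arguments $(\p_{\tau,tt},A(\p_\tau),\nabla\p_{\tau,t})$ depend affinely on $\tau$ with admissible endpoints. With that one-line replacement your argument is complete.
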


\begin{proof}

Let $ \ol{ \phi } $ be a solution to the Dirichlet problem

\begin{equation}
\begin{cases}
    & n + u_{ t t } + \Delta u + n X u = 0 \hspace{1cm} \text{ in } M \times ( 0, 1 ) \\
    & u( x, 0 ) = \phi_0 \\
    & u( x, 1 ) = \phi_1 \\
\end{cases}
\end{equation}

Then since $ \phi $ is a subsolution of this equation, it follows from the maximum principle that $ \phi \leq \ol{ \phi } $.

For the lower bound, assume for contradiction that $ \phi < \ul{ \phi } $ somewhere in the interior, so that $ \ul{\phi} - { \phi } $ attains a positive maximum at an interior point $q$.

From the subsolution, at the point $q$, we know that 
\begin{equation}\label{C7}
    F( \ul{ \phi } ) - F(  \phi  ) > 0
\end{equation}

So by concavity of $F$

\begin{equation}\label{C8}
    \mathcal{ L } ( \ul{ \phi } - \phi ) > 0 
\end{equation}

But by Lemma \ref{ellipticity}, $ \mathcal{ L } ( \ul{ \phi } - \phi ) \leq 0$ which contradiction.

\end{proof}

\

Boundary and interior estimates for $| \phi_t |$ can be shown as follows.

\begin{prop}
    For any solution $\phi$ of \eqref{C3.1}, there is a uniform constant $C$ so that 

\begin{equation}
    \begin{aligned}
     \sup\limits_{ Y } |\phi_t |  \leq C \\
    \end{aligned}
\end{equation}

\end{prop}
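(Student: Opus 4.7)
The plan is to separate the boundary and interior estimates. For the boundary, I would exploit the sub/supersolution sandwich from the previous proposition, and for the interior, I would show that $\p_t^2$ is a subsolution of the linearized operator $\mathcal{L}$ by differentiating the equation \eqref{C3.1} in the time variable.

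\textbf{Boundary estimate.} By the preceding proposition we have $\underline{\p} \leq \p \leq \overline{\p}$ on $Y$, and all three functions coincide with $\phi_0$ on $M \times \{0\}$ and with $\phi_1$ on $M \times \{1\}$. Hence $\p - \underline{\p}$ and $\overline{\p} - \p$ are non-negative functions vanishing on $\partial Y$, so their inward normal derivatives in the $t$-direction have controlled signs. This yields
$$\underline{\p}_t(x,0) \leq \p_t(x,0) \leq \overline{\p}_t(x,0), \qquad \overline{\p}_t(x,1) \leq \p_t(x,1) \leq \underline{\p}_t(x,1),$$
and since $\underline{\p}$ and $\overline{\p}$ are smooth and explicit, $|\p_t|$ is bounded on $\partial Y$.

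\textbf{Interior estimate.} Since $X=X(z)$ does not depend on $t$, differentiating \eqref{C3.1} in $t$ yields
$$\mathcal{L}(\p_t) + nX\p_t\p_{tt} = -nX\p_t\p_{tt}, \qquad \text{i.e.} \qquad \mathcal{L}(\p_t) = -2nX\p_t\p_{tt}.$$
Next, I compute $\mathcal{L}(\p_t^2)$. The general identity
$$\mathcal{L}(v^2) = 2v\mathcal{L}(v) + 2\bigl(A(\p) v_t^2 + \p_{tt}|\nabla v|^2 - 2\mathrm{Re}\,\textstyle\sum_k \p_{t\bk} v_t v_k\bigr)$$
holds for any smooth $v$. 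Taking $v=\p_t$, the quadratic term becomes $2\p_{tt}(A(\p)\p_{tt}-|\nabla\p_t|^2) = 2\p_{tt}L$ by the equation itself. Combining with the formula for $\mathcal{L}(\p_t)$ gives
$$\mathcal{L}(\p_t^2) = -4nX\p_t^2 \p_{tt} + 2\p_{tt} L.$$
Since $X\leq 0$, $L\geq \epsilon>0$, and ellipticity of $F$ at $\p$ (Lemma~\ref{ellipticity}) forces $\p_{tt}>0$ (indeed $\p_{tt} A(\p) > |\nabla \p_t|^2 \geq 0$ and $A(\p)>0$), we obtain $\mathcal{L}(\p_t^2)\geq 2\p_{tt} L > 0$.

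\textbf{Conclusion.} The operator $\mathcal{L}$ is elliptic on $Y$ by Lemma~\ref{ellipticity}, so the maximum principle applies: the subsolution property $\mathcal{L}(\p_t^2)\geq 0$ forces $\p_t^2$ to attain its supremum on $\partial Y$, where it is already bounded by the first step. The main subtlety is ensuring that the quadratic form arising in $\mathcal{L}(v^2)$ collapses neatly to $2\p_{tt}L$; this is where the specific structure of the geodesic equation, via the cancellation $A(\p)\p_{tt}-|\nabla\p_t|^2 = L$, plays the decisive role. The hypothesis $X\leq 0$ then guarantees that the extra torsion contribution $-4nX\p_t^2\p_{tt}$ enters with a favorable sign, rather than fighting against the estimate.
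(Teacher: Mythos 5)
Your proof is correct, but the interior step takes a genuinely different (and heavier) route than the paper. The boundary estimate is the same as the paper's: sandwich $\ul{\p}\leq\p\leq\ol{\p}$ with equality at $t=0,1$, then compare difference quotients to bound $\p_t$ on $\partial Y$. For the interior, the paper finishes in one line: since $\p_{tt}\geq 0$ (indeed $\p_{tt}A(\p)=|\nabla\p_t|^2+L>0$ with $A(\p)>0$), the function $\p_t$ is nondecreasing in $t$, so $\p_t(0,z)\leq \p_t(t,z)\leq \p_t(1,z)$ and the interior supremum is controlled directly by the boundary values — no differentiation of the equation, no maximum principle, and no use of $X\leq 0$ beyond $L>0$. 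Your alternative — differentiating in $t$ to get $\mathcal{L}(\p_t)=-2nX\p_t\p_{tt}$, then computing $\mathcal{L}(\p_t^2)=-4nX\p_t^2\p_{tt}+2\p_{tt}L>0$ and invoking the maximum principle for the strict subsolution $\p_t^2$ — is a correct computation (the collapse of the quadratic term to $2\p_{tt}L$ via the equation is exactly right), and it has the merit of previewing the cancellation structure that the paper exploits later in the interior $C^2$ estimate; but it requires the sign hypothesis $X\leq 0$ and the same positivity $\p_{tt}>0$ that already yields the paper's shorter monotonicity argument. Both approaches are sound; the paper's is more elementary, yours makes the role of the equation's structure and of the degenerate-ellipticity assumption explicit.
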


\begin{proof}
Since $ \phi_{t t} \geq 0 $, integrating $\phi_{tt}$ in $ [0, t] $ and $[ t, 1 ]$ gives

\begin{equation}
\begin{aligned}
    &\phi_{ t } (t, z) \geq \phi_t ( 0 , z ), \hspace{1cm} \text{and} \hspace{1cm} \phi_{ t } (t, z) \leq \phi_t ( 1 , z ).
    \end{aligned}
\end{equation}

So it is enough to estimate $ \phi_t $ on the boundary. Observe that

\begin{equation}
\lim\limits_{ t \to 0^+} \frac{ \ul{\phi}( t, z ) - \ul{ \phi } ( 0, z ) }{ t } \leq \phi_t ( 0, z ) \leq \lim\limits_{ t \to 0^+}  \frac{ \ol{\phi}( t, z ) -  \ol{ \phi } ( 0, z ) }{ t }
\end{equation}

This shows that $| \phi_t( 0, z ) | \leq C $. Similarly, one can show that $| \phi_t( 1, z ) | \leq C $.

\end{proof}


\section{$\p_{tt}$ estimate}
\label{S}

 Let $Q = \p_{ t t } +  ( \ul{ \p } - \p )$ attain maximum at $z_0$ in the interior of $Y$. Then

\begin{equation}\label{S1}
\begin{aligned}
    F^{\al \obe} \p_{ t t \al \obe} +  F^{\al \obe } ( \ul{\p} - \p )_{ \al \obe } \leq 0
\end{aligned}   
\end{equation}

\noindent and 

\begin{equation}\label{compatibility}
    \p_{ttt} = - (\ul{\p}_t - \p_t),
\end{equation}

\noindent which implies $\p_{ttt}$ is uniformly bounded at the point $z_0$, from Section \ref{G}.

Write the equation \eqref{C3.1} as

\begin{equation}
    \log(\phi_{ t t } A(\p ) - \sum_k |\p_{ k t}|^2) = \log{\left(\epsilon - \frac{n X \p_t^2}{2}\right)}
\end{equation}
\noindent and differentiate by $\partial_t \partial_t$ to get

\begin{equation}\label{S2}
    \begin{aligned}
        \frac{1}{G(\p)}& F^{\al \obe} \p_{ t t \al \obe} + \frac{1}{G(\p)} F^{\al \obe, \ga \ode} \p_{\al \obe t} \p_{\ga \ode t}  + \frac{1}{G(\p)} (n X \p_{t t}^2  + 2n X \phi_t \phi_{ t t t })\\
        &-\frac{1}{G(\p)^2}\left(n X \p_{tt} \p_t + F^{\alpha \bar{\beta}} \p_{\alpha \bar{\beta} t }\right)^2= \frac{ 1 }{ L } L_{ t t } - \frac{1}{L^2} L_t^2 
    \end{aligned}
\end{equation}

We compute

\begin{equation} \label{S3}
    L_{ t t } = - n X \p_{ t t }^2 - n X \phi_t \phi_{ t t t }, \hspace{1cm}
    L_{ t }^2 = ( n X \p_{ t t } \phi_t )^2.
\end{equation}

Now there is an important cancellation between terms that are quadratic in $\phi_{ t t } $.

\begin{equation} \label{S5}
\begin{aligned}
   \frac{ 1 }{ L } L_{ t t } - \frac{1}{L^2} L_t^2 - \frac{1}{G(\p)} (n X \p_{t t}^2  + 2n X \phi_t \phi_{ t t t })=
   \frac{ -2\epsilon n X \phi_{ t t }^2 -3 \epsilon n X \p_t \p_{ttt} + (3/2) n^2 X^2 \p_t^3 \p_{ttt}  }{L^2} \\
   \end{aligned}
\end{equation}

Here we used the equation $G(\p) = L$. By concavity of $F$ and plurisubharmonicity of \eqref{plurisub}, the second term in \eqref{S2} is negative. Hence we get that 

\begin{equation}\label{S6}
F^{\al \obe } \p_{ t t \al \obe} \geq \frac{ -3 \epsilon n X \p_t \p_{ttt} + (3/2) n^2 X^2 \p_t^3 \p_{ttt}  }{L} \geq -6 n \sup\limits_{Y}|X\p_t\p_{ttt}|
\end{equation}

\noindent where we used $L \geq \min\{\epsilon, -nX\p_t^2/2\}$. This is a bounded quantity. Since $A(\p) > 0$, by assuming that $\phi_{tt} \gg 1$ at $z_0$, from \eqref{C5.4} it is clear that

\begin{equation}\label{S7}
 F^{\al \obe } ( \ul{\p} - \p )_{ \al \obe } \gg 1   
\end{equation}

Inequalities \eqref{S6} and \eqref{S7} will together contradict \eqref{S1}. Hence the maximum for $Q$ must be attained at the boundary of $Y$.

\begin{equation} \label{S8}
   \sup_{Y} \p_{t t } \leq C \sup_{\partial Y}  ( 1 + \phi_{ t t } ) 
\end{equation}

\section{ $C^2$ Boundary estimates}
\label{B}

Denote $K = \sup\limits_{\partial Y}{(1 + |\nabla u|^2)}$. Then $K$ is bounded because of the boundary conditions. It is enough to show that $| \p_{ k t } | \leq C K$ on the boundary. Estimates for $\p_{ t t }$ and $ \Delta \p $ on the boundary will follow from the equation and the boundary conditions respectively.

We derive the estimate around a boundary point corresponding to $ t = 0 $ by constructing a local barrier function. The $ t = 1 $ case can be done similarly. Let $ z_0 $ be any point on the boundary at $ t = 0 $. Consider a coordinate ball $B_{\de}$ centered at $ z_0 = 0 $ of radius $ \delta $. Then define a barrier function $h$ on $ \Omega_{\de} = B_{\de} \times [ 0 , t_0]$ with $0 < t_0 < 1 $ given by

$$h = A_1( \p - \ul{\p} ) + B |z|^2 + C( t - t^2 ) + ( \p - \ul{\p} )_k $$

\

\noindent where $A_1$, $B$, $C$ are large multiples of $K$ to be fixed later. Let $C_1$, $C'$ and $c_0$ denote independent uniform constants.

First we show that $h \geq 0$ on $\partial \Omega_{\de}$. There are three cases.

\begin{enumerate}
    \item  If $t=0$, then $h= B|z|^2 \geq 0 $.

    \item If $t = t_0$, then 

    $$h = A_1(\p - \ul{\p} ) + B |z|^2 + C(t_0 - t_0^2) + (\p - \ul{\p})_k \geq 0$$

    \noindent for $ C \gg K $.

    \item If $z \in \partial B_{\de}$, then 

    $$h = A_1(\p - \ul{ \p } ) + B \de^2 + C( t - t^2) + (\p - \ul{\p})_k \geq 0$$

    \noindent for $B \gg K $.
    
\end{enumerate}

Now we compute $ \mathcal{L} h $.  From \eqref{C5.4}, it follows that 

\begin{equation} \label{S19}
    \mathcal{L} (\p - \ul{ \p } ) < - \epsilon \sum F^{\al \oal} = - \epsilon C_1 ( \p_{ t t } + A( \p ) )
\end{equation}

\begin{equation} \label{S20}
    \mathcal{L} ( B|z|^2 + C (t - t^2 ) ) = 2 n B \p_{ t t }  -2 C A(\p) 
\end{equation}

By differentiating equation \eqref{pde} we also get

\begin{equation} \label{S21}
    | \mathcal{L} ( \p -\ul{ \p} )_k | \leq C_1( |\p_{ t t }| + |\p_{ t k }| )
\end{equation}

From \cref{S19,S20,S21}, it follows that $\mathcal{L}(h) \leq 0 $ for $A_1$ much larger than both $B$, $C$ and other bounded constants. Hence by maximum principle, $h \geq 0$ on $\Omega_{\delta}$. Since $h(0) = 0$, and

$$\frac{\partial h}{\partial t } \geq 0 $$

\noindent at $z_0$, it follows that $- \p_{ t k }(0) \leq C' K$. Similarly by considering $A_1(\p - \ul{ \p } ) + B |z|^2 + C( t - t^2) - (\p - \ul{\p})_k$, we also get $\p_{ t k }(0) \leq C' K$. As a result $|\p_{ t k }(0)| \leq C'K$.

To bound $\p_{ t t }(0)$, notice that $A(\p) = A(\p_0)> c_0 >0$ at $0$ and hence $\p_{ t t } = \dfrac{ \sum |\p_{k t}|^2 + L}{A( \p )} $ is bounded at $0$. This gives the boundary estimates and can write

$$\sup_{\partial Y}~ (|\phi_{ t t }| + |\Delta \p | ) \leq C' $$

\

\section{Calabi-Yau theorem for balanced metrics}
\label{CYB}

Since S.-T. Yau \cite{Yau78} proved the Calabi conjecture in 1976, there has been great interest in establishing similar theorems in non-K\"ahler geometry. That is, to show the existence of special Hermitian metrics with prescribed Chern-Ricci forms. 

An important result along these lines is the Gauduchon conjecture \cite{Gauduchon84} that was resolved in \cite{STW17}. Perhaps a more interesting theorem from the perspective of geometry and mathematical physics would be to solve the same problem for balanced metrics. We refer to the related work of Fu-Wang-Wu \cite{FWW10} to how this implies the conformal balanced equation in the Strominger system. This asks for a Hermitian metric $\omega'$ such that 

\begin{equation}\label{conformal}
d(||\Omega||_{\omega'} {\omega'}^2) = 0
\end{equation}

\noindent for a non-vanishing holomorphic $(3,0)$ form $\Omega$ on a $3$ dimensional Hermitian manifold $(M, \omega)$. Fixing $||\Omega||_{\omega'} = C$, \eqref{conformal} says $\omega'$ is a balanced metric. Then

$$\frac{{\omega'}^3}{\omega^3} = \frac{||\Omega||^2_{\omega}}{||\Omega||^2_{\omega'}} = e^{k}$$

\noindent for some function $k$. This has been studied by Fu-Wang-Wu, solving it on a flat torus in dimension $3$ by considering some explicit parametrizations of the metric \cite{FWW10}, and also on K\"ahler manifolds under non-negative curvature assumptions \cite{FWW15}. The equation we consider is a bit different and will use more general cohomology relations similar to \cite{STW17}.

Given a balanced metric $\omega$ and a background Hermitian metric $\alpha$, define a new metric $\omega_u$ by

\begin{equation}
    \omega_u^{n-1} = \omega^{n-1} + \sqrt{-1} \pbp (u\alpha^{n-1})
\end{equation}

Clearly, $\omega_u$ is also balanced as $d \omega_u^{n-1} = 0$. Given a $(1,1)$ form $\Psi$ in $c_1^{BC}(M)$, we look for an unknown function $u$, such that

\begin{equation}
    \mathrm{Ric}^C(\omega_u) = -\sqrt{-1} \pbp \log \det \omega_u = \Psi
\end{equation}

Written as a PDE in local coordinates this becomes

\begin{equation}\label{CYb}
    \det \left(\omega_h + \frac{1}{(n-1)}\left(\Delta_{\alpha} u ~\alpha - \sqrt{-1} \pbp u\right) + \chi(\partial u, \bpartial u) + E u \right) = e^{\psi} \det \alpha
\end{equation}

\noindent where $\omega_h = \star \omega^{n-1}$, $\chi(\partial u, \bpartial u)$ is smooth $(1,1)$ form involving the torsion tensor and is linear in $\nabla u$, and 

$$E= \star \sqrt{-1} \pbp \alpha^{n-2}$$

\noindent is a $(1,1)$ form. Here $\star$ is the Hodge star operator with respect to the metric $\alpha$. We refer to \cite{STW17} for this transformation and the exact form of $\chi(\partial u, \bpartial u)$. First we make the following defintions.

\begin{Def} A Hermitian metric $\alpha$ is called 

\

\begin{enumerate}[(i)]
    \item { Sub-Astheno-K\"ahler} if $\star \sqrt{-1} \pbp \alpha^{n-2} \leq 0$.

    \
    
    \item { Super-Astheno-K\"ahler} if $\star \sqrt{-1} \pbp \alpha^{n-2} \geq 0$.
\end{enumerate}
\end{Def}

A Hermitian manifold is sub- or super-Astheno-K\"ahler if it admits such a metric.

Define the metric

\begin{equation}\label{u-metric}
\tilde{\omega}_{u} =\omega_h + \frac{1}{(n-1)}\left(\Delta u \alpha - \sqrt{-1} \pbp u\right) + \chi(\partial u, \bpartial u) + E u > 0 .
\end{equation}

We show that assuming that $E \leq 0$ as a $(1,1)$ form is sufficient for obtaining $C^0$ estimates for this equation. This will done in two different ways. First we use the ABP maximum principle introduced by Blocki \cite{Blocki11} for complex Monge-Amp\`ere equation and then extended to general fully nonlinear case admitting a $\mathcal C$-subsolution by Sz\'ekelyhidi \cite{Szekelyhidi18}. The second proof will be based on the technique using the auxilliary Monge-Amp\`ere equation. Note that the $\alpha$-trace of $E$ is exactly the quantity $X$ (for $\alpha$) when $p=n-1$ from the previous sections.

$$X = \mathrm{tr}_{\alpha} E$$

\begin{theorem}\label{theorem-CYb}
    Let $(M, \omega)$ be a compact balanced manifold. Assume that $E \leq 0$ on $M$ for some Hermitian metric $\alpha$. Then for any solution $u$ of equation \eqref{CYb}
    
    $$\sup_M ~ |u| \leq C,$$
    \noindent for a uniform constant $C$ that depends only on $(M,\omega)$, $\psi$, and $\alpha$.
\end{theorem}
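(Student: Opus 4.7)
My plan is to obtain the $C^0$ estimate by handling $\sup u$ and $\inf u$ separately, using a direct maximum principle for the former and the Alexandrov--Bakelman--Pucci (ABP) estimate in the form developed by Blocki~\cite{Blocki11} and Sz\'ekelyhidi~\cite{Szekelyhidi18} for the latter, with the sign condition $E\le 0$ playing the decisive role at both extrema.

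For the upper bound $\sup u \le C$, I would apply the maximum principle at a point $x_0$ with $u(x_0) = M = \sup u$. At $x_0$ the gradient vanishes, so $\chi(\partial u,\bpartial u)(x_0) = 0$, and $\sqrt{-1}\pbp u(x_0) \le 0$. Diagonalising at $x_0$ with $\alpha = I$ and eigenvalues $\lambda_i \le 0$ of $\sqrt{-1}\pbp u$, each diagonal entry of $(n-1)^{-1}(\Delta u\,\alpha - \sqrt{-1}\pbp u)$ equals $(n-1)^{-1}\sum_{j\ne i}\lambda_j \le 0$, so the $(n-1)$-Hessian term is a non-positive matrix. Thus $\tilde\omega_u(x_0) \le \omega_h + ME$ at $x_0$, and since $\tilde\omega_u > 0$ one must have $\omega_h + ME > 0$ there; testing against the eigenvector of $E(x_0)$ with smallest eigenvalue $\mu_{\min}(E(x_0))$ bounds $M$ in terms of $\mathrm{tr}_\alpha\omega_h/|\mu_{\min}(E(x_0))|$ whenever this eigenvalue is strictly negative. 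Taking $\alpha$-traces and applying AM--GM to the equation gives the sharper form
\begin{equation*}
    n\,e^{\psi(x_0)/n} \le \mathrm{tr}_\alpha\tilde\omega_u(x_0) \le \mathrm{tr}_\alpha\omega_h(x_0) + X(x_0)\,M.
\end{equation*}
The case where $E(x_0) = 0$ can be handled by first perturbing $\alpha$ so that $E$ becomes strictly negative, obtaining uniform bounds via the argument just described supplemented by an integration of $\tilde\omega_u^n = n!\,e^\psi \alpha^n$ to control the blow-up of the perturbation constant, and then passing to the limit.

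For the lower bound $\inf u \ge -C$, suppose $-L = u(x_0) = \inf u$ with $L$ large, and work in a fixed coordinate chart on a small ball $B_\delta(x_0)$ in which the geometry is almost Euclidean. I consider the test function $v(z) = u(z) + \epsilon|z-x_0|^2$ with $\epsilon \sim \delta^{-2}$ and apply the real ABP inequality
\begin{equation*}
    L \;\le\; C_n \left(\int_{P}\det\nolimits_\bbR D^2 v\right)^{1/(2n)},
\end{equation*}
where $P$ is the contact set of $v$ with its concave envelope. On $P$ the real Hessian $D^2 v \ge 0$, hence $\sqrt{-1}\pbp v \ge 0$. The crucial observation is that at points of $P$ where $u \le 0$, the hypothesis $E \le 0$ makes $Eu \ge 0$ as a $(1,1)$-form; rewriting $\tilde\omega_u$ in terms of $v$ and using the equation together with the basic inequality $\det_{\bbR} D^2 v \le 2^{2n}(\det_{\bbC}v_{i\bar j})^2$ yields a pointwise upper bound for the integrand and closes the ABP estimate.

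The main obstacle is that the zeroth-order term $Eu$ breaks the translation invariance normally used to normalize $u$, so independent arguments are needed at each extremum; the hypothesis $E \le 0$ is tailored precisely for this, tightening the matrix constraint $\tilde\omega_u > 0$ at the maximum while placing $Eu$ with the correct sign to control the complex Hessian on the ABP contact set at the minimum. For the alternative proof under the weaker assumption $\mathrm{tr}_{\tilde\omega_u} E \le 0$, I would solve an auxiliary complex Monge-Amp\`ere equation $\det(\alpha + \sqrt{-1}\pbp\rho) = e^{pu+c_p}\det\alpha$ in the style of Tosatti--Weinkove~\cite{TW10}, apply the maximum principle to $u - \rho$ so that the $\tilde\omega_u$-trace of the linearization picks up $\mathrm{tr}_{\tilde\omega_u} E$ with the favorable sign absorbing the zeroth-order contribution, and then iterate (Moser) the resulting $L^p$ bound on $e^u$ to reach the $L^\infty$ bound.
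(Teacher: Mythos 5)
Your treatment of the infimum is where the essential ingredient is missing. Since $x_0$ is the \emph{global} minimum of $u$, on $\partial B_\delta(x_0)$ you only know $v=u+\epsilon|z-x_0|^2\ge u(x_0)+\epsilon\delta^2$, so the ABP inequality controls the drop of $v$ from the boundary of the ball to its centre, which is $\epsilon\delta^2=O(1)$ --- \emph{not} $L=-\inf u$. The correct conclusion is exactly \eqref{ABPint}: a fixed lower bound $c_0\kappa^{2n}\le\int_{\Gamma^+_\kappa}\det D^2v$. After you bound the integrand pointwise on the contact set, this yields only a lower bound on the \emph{measure} of the contact set and by itself says nothing about $L$; your displayed inequality with $L$ on the left is not what ABP gives. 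To convert the measure bound into a bound on $\inf u$ one needs an integral bound on $u$, which in the paper comes from the weak Harnack inequality (Theorem 8.18 of \cite{GT83}) applied to the trace inequality \eqref{trace}, using $X=\mathrm{tr}_\alpha E\le 0$ a second time; combining $\int_{B(1)}|u|^p\le C(1+\inf_{B(1)}|u|)$ with $v\le v(0)+\kappa/2$ on $\Gamma^+_\kappa$ and the volume bound is what closes the argument. In addition, your assertion that ``the equation together with $\det_{\mathbb R}D^2v\le 2^{2n}(\det v_{i\bar j})^2$ bounds the integrand'' skips the step where the structure of the $(n-1)$-operator is actually used: on the contact set one has $u_{i\bar j}\ge-\kappa\delta_{ij}$, $|Dv|\le\kappa/2$ (so the gradient form $\chi$ is under control) and $Eu\ge0$, and it is the $\mathcal{C}$-subsolution property \eqref{Csub} with $\underline u=0$ --- equivalently, the observation that one large eigenvalue of $u_{i\bar j}$ forces $n-1$ eigenvalues of $\tilde\omega_u$ to be large while the remaining one stays bounded below, contradicting $\det\tilde\omega_u=e^\psi\det\alpha$ --- that bounds $|u_{i\bar j}|$ there. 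Unlike the plain Monge--Amp\`ere case of Blocki, this is not immediate from the equation alone.

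For the supremum your pointwise argument at the maximum point $x_0$ only produces a bound when $X(x_0)<0$ (equivalently $E(x_0)\neq0$); under the stated hypothesis $E\le0$ the form $E$ may vanish at $x_0$, and then $n e^{\psi(x_0)/n}\le \mathrm{tr}_\alpha\omega_h(x_0)+X(x_0)M$ contains no information about $M$. The proposed rescue --- perturb $\alpha$ so that $E$ becomes strictly negative and pass to the limit --- does not work: $E=\star\sqrt{-1}\pbp\alpha^{n-2}$, $\omega_h=\star\omega^{n-1}$, $\Delta_\alpha$ and $\chi$ all change with $\alpha$, so $u$ no longer solves the perturbed equation, and strict negativity of $E$ can be outright impossible (if the manifold carries a closed positive $(1,1)$-form $\gamma$, e.g.\ in the K\"ahler case, Stokes' theorem gives $\int_M\sqrt{-1}\pbp\alpha^{n-2}\wedge\gamma=0$, which rules out $E<0$ everywhere). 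The paper instead takes the $\alpha$-trace of $\tilde\omega_u>0$ to obtain the global differential inequality \eqref{trace}, whose zeroth-order coefficient is $X\le0$, and applies the linear maximum principle (Theorem 3.7 of \cite{GT83}); it is this global use of the sign of $X$, rather than its value at a single point, that your argument lacks. (Your closing sketch under $\mathrm{tr}_{\tilde\omega_u}E\le0$ via an auxiliary \emph{complex} Monge--Amp\`ere equation and Moser iteration also differs from the paper's comparison with an auxiliary \emph{real} Monge--Amp\`ere equation in the style of Guo--Phong, but that part lies outside the theorem under review.)
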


We remark that due to the works of Tosatti-Weinkove \cite{TW17, TW19}, Sz\'ekelyhidi-Tosatti-Weinkove \cite{STW17}, and Guan-Nie \cite{GN23}, it is enough to derive $C^0$ estimates, as all the other estimates will follow similar to those calculations. The $C^2$ estimates can be obtained by simple modifications in the proof of Gauduchon conjecture \cite{STW17}, or the work of Guan-Nie \cite{GN23}. Hence Theorem \ref{theorem-CYb} gives an existence result for equation \eqref{CYb} satisfying the condition $E\leq 0$. In fact, the solution will be unique under this assumption. From \cite{Szekelyhidi18}, we recall the notion of a $\cC$-subsolution, originally introduced by Guan \cite{Guan14}.

\begin{Def}
    Suppose that $(M, \alpha)$ is a Hermitian manifold and $\omega_h$ is a real $(1,1)$ form. We say that a smooth function $\ul{u}$ is a $\cC$-subsolution for 
    $$f(\lambda(\alpha^{i \bk}(\tilde{\omega}_{{u}})_{j \bk})) = h$$
    if at each $x \in M$, the set

    $$\{\lambda[\alpha^{i \bk} (\tilde{\omega}_{\ul{u}})_{j \bk}] + \Gamma_n \} \cap \partial \Gamma^{h(x)}$$

    \noindent is bounded.
\end{Def}

Here $f$ is a symmetric function of eigenvalues with standard structure assumptions of \cite{CNS85}. From the definition it can be seen that $0$ is a $\mathcal{C}$-subsolution to $\eqref{CYb}$. A smooth function $\ul{u}$ being a $\mathcal C$-subsolution implies that there exists a $\delta > 0$ and $R >0$ such that at each $x$

\begin{equation}\label{Csub}
    \{\lambda[\alpha^{i \bk} (\tilde{\omega}_{\ul{u}})_{j \bk}] - \delta \mathbf{1} + \Gamma_n \} \cap \partial \Gamma^{h(x)} \subset B_R(0)
\end{equation}

\

\begin{proof}[Proof of Theorem \ref{theorem-CYb}]

The proof uses a version of the ABP maximum principle. 

First observe that $\mathrm{tr}_{\alpha}\tilde{\omega}_u > 0$, would give the following elliptic equation. 

\begin{equation}\label{trace}
    \Delta_{\alpha} u + \mathrm{tr}_{\alpha} \chi(\nabla u) + X u + f(z) > 0
\end{equation}

\noindent for some function $f$. Then from linear elliptic theory (Theorem $3.7$ in \cite{GT83}), using that $X \leq 0$, we obtain estimates for the supremum of the function $u$.

$$\sup_M u \leq C.$$

\noindent for $C$ depending on $f$ and the coefficients of the equation. So it is enough to estimate $\inf\limits_M u$. Let $ m = \inf\limits_M u$ be attained at a point $z_0$ on $M$ and assume that $m < 0$. Choose local coordinates that takes $z_0$ to the origin and consider the coordinate ball $B(1) = \{z : |z| <1\}$, chosen small enough so that $u\leq 0$ on $B(1)$. Let $v = u + \kappa |z|^2$ for a small $\kappa > 0$, so that $\inf\limits_{B(1)} v = m = v(0)$, and $\inf\limits_{z \in \partial B(1)} v(z) \geq m + \kappa$.

Then by the ABP maximum principle for upper contact sets (Chapter $9$ of \cite{GT83}, Proposition $10$ in \cite{Szekelyhidi18}), the set 

$$\Gamma^+_{\kappa} = \{x \in B(1): |Dv(x)| \leq \frac{\kappa}{2} \text{ and } v(y) - v(x) \geq Dv(x).(y-x) \text{ for all } y \in B(1)  \}$$

\noindent satisfies 

\begin{equation}\label{ABPint}
    \int_{\Gamma^+_{\kappa}} \det{D^2 v} \geq c_0 \kappa^{2n}. 
\end{equation}
\noindent for some positive constant $c_0$. The following can be verified at any $x \in \Gamma^+_{\kappa}$ as in \cite{Blocki11},

\begin{enumerate}
    \item $D^2v(x) \geq 0$.
    \item $\det(D^2v) \leq 2^{2n} (\det v_{i \bj})^2$.
    \item $u_{i \bj} (x) \geq - \kappa \delta_{i j}$.
\end{enumerate}

From this and using $E\leq 0$ we can conclude that in the set $\Gamma_{\kappa}^+$

\begin{equation}
    \tilde{\omega}_u \geq \omega_h + \frac{1}{(n-1)}\left(\Delta u \alpha - \sqrt{-1} \pbp u\right) - \epsilon' \alpha
\end{equation}

\noindent for a small $\epsilon'>0$ that depends on $\kappa$, which is fixed small enough depending on $\alpha$. 


It follows that at $x \in \Gamma_{\kappa}^+$

$$\lambda [\alpha^{i \bk} (\tilde{\omega}_u)_{j \bk}] \in \{ \lambda[\alpha^{i \bk } (\omega_h)_{i \bk}] - \delta \mathbf{1} + \Gamma_n \}$$

\noindent for $\delta$ small depending on $\epsilon'$, and $\kappa$.

From the equation \eqref{CYb}, it is clear that $\lambda [\alpha^{i \bk} (\tilde{\omega}_u)_{j \bk}] \in \partial \Gamma^\sigma$ for $\sigma = e^{\psi} \det \alpha$. Hence we get from \eqref{Csub} that 

$$|u_{i \bj}| \leq C.$$

This gives a uniform bound for $v_{i \bj}$ in $\Gamma_{\kappa}^+$. Then it follows from the above using \eqref{ABPint} 

$$c_0 \kappa^{2n} \leq C' \mathrm{vol}(\Gamma_{\kappa}^+),$$

\noindent for some constant $C' >0$. From the weak Harnack inequality (Theorem 8.18 in \cite{GT83}) applied to equation \eqref{trace}, on $B(1)$

$$  \int_{B(1)} |u|^p \leq C (1 + \inf_{B(1)}|u|) \leq C'$$

This implies that $|v|_{L^p}$ is bounded. In the set $\Gamma_{\kappa}^+$, $v(x) \leq v(0) + \dfrac{\kappa}{2}$. Putting these together with 
\begin{equation}
    \mathrm{vol}(\Gamma^+_{\kappa}) \left|v(0) + \frac{\kappa}{2} \right|^p ~\leq ~\int_{\Gamma^+_{\kappa}} |v|^p ~\leq ~C,
\end{equation}

\noindent we get $|m + \kappa/2|^p \leq C \kappa^{-2n}$, which shows that $m$ is bounded.

\end{proof}

\begin{Rmk}
    The technique from above extends directly to the case of fully nonlinear equations of symmetric functions of eigenvalues under the $\mathcal C$-subsolution condition. Hence one can consider equations of the form

    \begin{equation}
        f(\lambda(\omega + \sqrt{-1} \pbp u + \chi(\partial u, \bpartial u) + E u)) = \psi(z) 
    \end{equation}
    \noindent for some $(1,1)$ form $E \leq 0$.
\end{Rmk}

\begin{Rmk}
  Examples of non-K\"ahler balanced manifolds that admit a Hermitian metric $\alpha$ so that $E \leq 0$ can be inferred from \cite{LU17}, using explicit constructions on complex nilmanifolds.  
\end{Rmk}

Next we show how the $C^0$ estimates can be obtained under a weaker assumption that $\mathrm{tr}_{\tilde{\omega}_u}E \leq 0$, by using the auxilliary Monge-Amp\`ere equation. The method of auxilliary Monge-Ampere equation, inspired by the works of Chen-Cheng \cite{CC21} on cscK metrics was developed by Guo-Phong-Tong \cite{GPT23} to give PDE proofs for $L^{\infty}$ estimates to the Monge-Amp\`ere equation, when the right-hand side is only $L^p$ for any $p>1$. This method was later extended to the case of more general fully nonlinear equations in \cite{GP23} and equations involving $(n-1)$ forms and gradient terms in \cite{GP23-1}. We show that this can be used to obtain $C^{0}$ estimates for equation \eqref{CYb} under a weaker assumption. The method is similar and we only show the part of the proof that obtains a comparison between solution of equation \eqref{CYb} and the solution of the auxilliary Monge-Amp\`ere equation. The argument for equation seen in Gauduchon conjecture has been treated in \cite{GP23-1}, where to deal with the gradient terms in this equation, Guo and Phong considers the real Monge-Amp\`ere equation for comparison, for which independent gradient estimates are available. Also see the related work of Klemyatin-Liang-Wang \cite{KLW23}.

We write \eqref{CYb} as
\begin{equation}
    \log{\det(\tilde {\omega}_u)} = \psi + \log{ \det{\alpha}}.
\end{equation}

The principal part of the linearization of this operator is given by

$$ Dv = \frac{1}{n-1} \left( \mathrm{tr}_{G} \alpha {g'}^{i \bj} - G^{i \bj} \right) \partial_i \bpartial_j v = D^{i \bj }  \partial_i \bpartial_j v,$$

\noindent where for simplicity we used $G^{i \bj} = \tilde {\omega}_u^{i \bj}$ and $g'$ is the metric corresponding to $\alpha$.

It is not too difficult to show that $D^{i \bj}$ is positive definite at a solution $u$, and 

$$\det(D^{i \bj}) > \gamma >0$$

\noindent for a constant $\gamma$ that depends only on the right-hand side of equation \eqref{CYb}.

Assume that $u$ attains a negative minimum at $z_0$. Let $B_{2r}$ denote the open ball of radius $2r$ around $z_0$ small enough so that the metric $\alpha$ is close to the euclidean metric in this ball, and such that $u\leq 0$ in $B_{2r}$. Similarly $B_r$ is a the ball of radius $r$ centered at $z_0$. The auxilliary real Monge-Amp\'ere equation is given by

\begin{equation}\label{aux}
    \det{\left(\frac{\partial^2 w_{s,k}}{\partial x_a \partial x_b}\right)} = \frac{\tau_k(-u_s)}{A_{s,k}} e^{\sigma} (\det g'_{i \bj})^2 
\end{equation}

\noindent where $\tau_k(x)$ is a sequence of smooth functions on $\bbR$ that converges to the $x .\chi_{\bbR^+}$ from above, $A_{s,k}$ is normalization constant so that the integral of the RHS is $1$, and $\sigma = 2n(\psi + \log(\det{\alpha}))$.

The solution $w_{s,k}$ with the boundary condition that $w_{s,k} = 0$ on $\partial B_{2r}$ exists, is bounded, and has a uniform bound for the gradient $|\nabla w_{s,k}|$ (Lemma $11$ in \cite{GP23-1}). Let $\epsilon'>0$ be a small constant, and consider for any $s \in (0, \epsilon' r^2)$, the function $u_s(z) = u(z) - u(z_0) + \epsilon'|z|^2 - s $, and the function

$$\varphi = -\tilde{\epsilon}(-w_{s,k} + \Lambda)^{\frac{2n}{2n+1}} - u_s.$$

Here $\Lambda$ and $\tilde{\epsilon}$ are constants to be chosen later that depends on the bounds on $|w_{s,k}|$, $|\nabla w_{s,k}|$ and other known quantities. Note that $w_{s,k}$ is a convex function on $B_{2r}$ and hence is non-positive. It can be immediately observed that $u_s \geq 0$ on on $B_{2r}\setminus B_r$ and hence the set $\Omega_s = \{z \in B_{2r} |~ u_s(z) <0\}$ is contained in $B_r$.

The following formula follows directly by taking the $\tilde{\omega}_u$-trace of the definition \eqref{u-metric} of the metric $\tilde{\omega}_u$.
\begin{equation}\label{cone-ineq}
    -D^{i \bj} u_{i \bj} = -n + \mathrm{tr}_{G} \omega_h + \mathrm{tr}_G \chi + (\mathrm{tr}_{G} E) u
\end{equation}

\noindent for a small constant $c_0>0$. To show that $\varphi \leq 0$ in $B_{2r}$, the point $z_1$ where $\varphi$ attains a maximum can be assumed to be in $ B_{r}$. At $z_1$, the compatibility equation $\nabla\varphi = 0$ implies
\begin{equation}\label{torsion}
\begin{aligned}
   |G^{i \bj}\chi_{i \bj}(\partial u, \bpartial u)|&\leq C\left|G^{i \bj}T_{\bj}\left(\frac{2n\tilde{\epsilon}}{2n+1}(-w_{s,k}+ \Lambda)^{-\frac{1}{2n+1}}(w_{s,k})_i - \epsilon' \bar{z}_i\right) \right|\\ 
   & \leq c_0 \mathrm{tr}_G\omega_h \\
   \end{aligned}
\end{equation}

\noindent where we use $T_{\bj}$ to denote the torsion coefficients that appear in $\chi(\partial u, \bpartial u)$, and $c_0$ is a small positive constant. In the last line, $\Lambda$ and $\epsilon'$ are chosen depending on $|{\nabla w_{s,k}}_i|$, $\omega_h$, $\tilde{\epsilon}$, and other background data to get this bound. Taking second derivatives of $\varphi$, we have at $z_1$ 

\begin{equation}
    \begin{aligned}
        0 &\geq D^{i \bj} \varphi_{i \bj} \\ &\geq  \frac{2n \tilde{\epsilon}}{2n+1} (-w_{s,k} + \Lambda)^{-\frac{1}{2n+1}} D^{i \bj} (w_{s,k})_{i \bj} - n + \mathrm{tr}_{G} \omega_h + \mathrm{tr}_G \chi - \epsilon' \mathrm{tr}_G \alpha + (\mathrm{tr}_G E)  u \\
        & \geq  \frac{2n^2 \tilde{\epsilon} \gamma^{1/n}}{2n+1} (-w_{s,k} + \Lambda)^{-\frac{1}{2n+1}} \frac{(-u_s)^{1/2n}e^{\frac{\sigma}{2n}}\det(g'_{i \bj})^{\frac{1}{n}}}{A_{s,k}^{1/2n}} - n + (\mathrm{tr}_G E) u
    \end{aligned}
\end{equation}

\noindent where we used equation \eqref{cone-ineq}, 

$$D^{i \bj }(w_{s,k})_{i \bj} \geq (\det{D^{i \bj}})^{1/n} (\det(w_{s,k})_{i \bj})^{1/n}, $$

\noindent and the auxilliary equation \eqref{aux}. We also used $(2)$ from proof of Theorem \ref{theorem-CYb}, and \eqref{torsion} here. Since by assumption $(\mathrm{tr}_G E)u \geq 0$, by a clever choice of $\tilde{\epsilon}$ in this equation we get that $\varphi(z_1) \leq 0 $.  To find the exact bound on $\tilde{\epsilon}$, set

$$r = \inf\limits_{M} \left(\frac{2n^2 \gamma^{1/n}e^{\sigma/2n} \det{(g'_{i \bj})^{1/n}}}{(2n+1) A_{s,k}^{1/2n}}  \right) > 0.$$

Then take $\tilde{\epsilon} \geq \left(\dfrac{n}{r}\right)^{\dfrac{2n}{2n+1}}$.

\

This gives a comparison between $u(z_0)$ and $w_{s,k}$. From here the argument is identical to \cite{GPT23}, and we can obtain $-\inf\limits_M u \leq C$, for a constant that depends on the background data, and the entropy of the function $e^{\psi} \det(\alpha)$.

Finally, we add some remarks on the openness part of the continuity method and uniqueness of the solution to equation \eqref{CYb}. The openness can be shown by applying inverse function theorem to the linearized operator. This follows the same steps from \cite{TW17} and we skip it here. For uniqueness, we could assume that there exist two distinct pairs $(b_1, u_1)$ and $(b_2, u_2)$ that solves the equation. Taking the difference of the equations and applying maximum principle as in \cite{TW17} proves the uniqueness up to a constant. The only additional comment is that in \cite{TW17}, the solutions were normalized to $\sup\limits_M{u} = 0$, otherwise it is only unique up to a constant. This is not possible in our case because of the term $Eu$. But at the same time, if $E \not\equiv 0$, then this condition is unnecessary as a translation of the solution will no longer solve the equation.

\section{Final Remarks}
\label{FR}
We state a few questions here that would be interesting to study further.

\

\begin{enumerate}[(i)]

\item It is unclear how to obtain interior $C^2$ estimates for equation \eqref{pde}. That is, estimates for $|\sqrt{-1}\pbp \p|$. The degeneracy of the equation, in addition to the function $X$ poses some difficulties.

\

    \item It would be interesting to investigate manifolds that admit Hermitian metrics such that $\pbp \alpha^{n-2} \leq 0 $. Are there any strong topological restrictions for this condition? For example 

$$\pbp \alpha^{n-2} = 0$$ 

\noindent is the astheno-K\"ahler condition of Jost-Yau \cite{JY93}, that is known to have some obstructions \cite{CR23}. It was shown by Jost and Yau that holomorphic $1$-forms on an astheno-K\"ahler manifold are $\partial$-closed. In fact, this holds under the weaker assumption $\pbp \alpha^{n-2} \geq 0$ by similar argument. If $\gamma$ is a holomorphic $1$-form, then

$$0 \geq \int \partial \gamma \wedge \bpartial \ol{\gamma} \wedge \alpha^{n-2} = \int \gamma \wedge \ol{\gamma} \wedge \pbp \alpha^{n-2} \geq 0 $$

This would imply that $\partial \gamma = 0$.

\

\item The condition $\mathrm{tr}_{\tilde{\omega}_u} E \leq 0$ suggests considering the continuity path

$$\Gamma_{\kappa} = \{u\in C^4(M): \tilde{\omega}_u >0 \text{ and } \mathrm{tr}_{\tilde{\omega}_u} E \leq \kappa <0 \}$$

This would need one to derive independent upper bound for $\mathrm{tr}_{\tilde{\omega}_u} E$ for a solution $u$. Such an estimate could solve the equation \eqref{CYb} under the weaker assumption that there exist a balanced metric $\omega$ and a Hermitian metric $\alpha$ such that

$$ \mathrm{tr}_{\star\omega^{n-1}}( \star\sqrt{-1} \pbp \alpha^{n-2} )\leq 0,$$

\noindent where $\star$ is with respect to $\alpha$. If $\alpha = \omega$, this reduces to $X$ from the geodesic equation above which clearly cannot be negative. But for a general $\alpha$ this might be admissible in all balanced manifolds.

\

\item Due to the similarities to the K\"ahler-Einstein equation, it makes sense to conjecture that \eqref{CYb} might not admit solutions, in general, if $E \geq 0$. 
    
\end{enumerate}

\


\

\bibliographystyle{plain}
\bibliography{references}

\end{document}